\numberwithin{equation}{section}
\theoremstyle{plain}
\newtheorem{theorem}{Theorem}[section]
\newtheorem{proposition}[theorem]{Proposition}
\newtheorem{lemma}[theorem]{Lemma}
\newtheorem{definition}[theorem]{Definition}
\newcommand{\CC}{\mathbb{C}} 
\newcommand\epsi{\varepsilon}
\newcommand\ove{\overline}
\newcommand\haz{\widehat}
\newcommand{\Rz}{{\mathbb R}}
\newcommand{\Nz}{{\mathbb N}}
\renewcommand{\d}{{\rm d}}
\DeclareMathOperator{\esssup}{{\rm ess\,sup}} 
\newcommand\BBB{\color{black}}
\newcommand\EEE{\color{black}}
\title[Accretive growth in elastic solids] {An existence result
  for \\ accretive growth in elastic solids}
\author[E. Davoli] {Elisa Davoli} 
\address[Elisa Davoli]{Institute of Analysis and Scientific Computing, TU Wien, 
Wiedner Hauptstrasse 8-10, A-1040 Vienna, Austria}
\email{elisa.davoli@tuwien.ac.at}
\author[K. Nik] {Katerina Nik}
\address[Katerina Nik]{University of Vienna,  Faculty of Mathematics, 
Oskar-Morgenstern-Platz 1, A-1090 Vienna, Austria}
\email{katerina.nik@univie.ac.at}
\author[U. Stefanelli]{Ulisse Stefanelli} 
	\address[Ulisse Stefanelli]{University of
		Vienna, Faculty of Mathematics,
                Oskar-Morgenstern-Platz 1, A-1090 Vienna, Austria. \\
		University of Vienna, Vienna Research Platform on Accelerating
		Photoreaction Discovery, W\"ahringerstra\ss e 17, 1090 Wien, Austria.\\
	 Istituto di	Matematica Applicata e Tecnologie Informatiche {\it E. Magenes}, via
		Ferrata 1, I-27100 Pavia, Italy
	}
	\email{ulisse.stefanelli@univie.ac.at}
	\urladdr{http://www.mat.univie.ac.at/$\sim$stefanelli}
\author[G. Tomassetti]{Giuseppe Tomassetti}
\address[Giuseppe Tomassetti]{Dipartimento di Ingegneria, Universit\`a
  degli Studi Roma Tre, via Volterra 62, I-00146 Roma, Italy}
\email{giuseppe.tomassetti@uniroma3.it}
\subjclass[2020]{74F99, %Coupling of solid mechanics with other effects
  74B20, %Nonlinear elasticity
74G22, %Existence of solutions of equilibrium problems in solid mechanics
  49L25} %Viscosity solutions to Hamilton-Jacobi equations in optimal control and differential games
\keywords{ Acccretive growth, elastic  solid, quasistatic
  evolution, variational formulation,  viscosity solution, existence}
\begin{document}

\begin{abstract} 
 We  investigate a model for the accretive growth of an 
 elastic  solid.   The reference configuration of the body is
 accreted in its normal direction, with space- and deformation-dependent
 accretion rate. The time-dependent reference configuration is identified via
 the level sets of the unique viscosity solution of a suitable
 generalized eikonal equation. After proving the global-in-time
 well-posedness of the quasistatic equilibrium %problem when the growth of the body
  under prescribed growth,  we prove  the existence of a local-in-time 
 solution for the coupled equilibrium-growth  problem,  where 
 both mechanical displacement and time-evolving set are unknown. %s of
 % the problem.
 A distinctive challenge is the limited  %key challenge of the
 % latter setting is the
 regularity of the growing body,  which calls for proving a new
 uniform Korn inequality. % This is overcome by means of a level-set approach allowing to deal with possible singularities originating from self-touching of the growing body. 
\end{abstract}
\maketitle

\section{Introduction}

%%%%%Old verson:

%A wealth of mechanical systems experience
%{\it accretive growth}, namely, the progressive accretion of the
%system by addition of mass at its boundary. 
%This is paramount to many
%  biological systems, which commonly change shape and function over time.
%In addition, accretive growth is a key aspect in a variety of
%technological applications, ranging from coating to additive
%manufacturing \cite{goriely}.
%%%%%%%

Many mechanical systems experience
{\it accretive growth}, namely, progressive growth 
of  a body  by adding mass at its boundary. This paradigm is of paramount relevance 
to numerous  biological systems, where shape and function evolve over
time:  The formation of horns, teeth and seashells
\cite{skalak,thompson}, secondary growth in trees
\cite{fournier-et-al}, and cell motility  due to actin growth  \cite{hodge} are
examples of accretive growth in nature. 
 Furthermore,  accretive growth is a key aspect in a variety of
technological applications,  such as, for example, metal solidification
\cite{schwerdtfeger-et-al}, crystal growth \cite{langer},  and additive manufacturing~\cite{horn}. 

The first theoretical study of accretive growth involved  the
analysis of  thick-walled cylinders manufactured by wire winding
of an initial elastic tube \cite{southwell}. Within the framework of
linear elasticity, one of the earliest problems addressed was that of
a growing planet subject to self-gravity \cite{brown}. In
\cite{metlov}, the author proposed  a  first
large-deformation theory of accretion, specifically tailored for aging
viscoelastic solids undergoing accretion.  This work also introduced
the notion of {\it time-of-attachment map},  which is the function
$\theta$ used here, as well.  The engineering literature on
accretive growth is vast. Among the many contributions, we single out
\cite{abi,bergel} for the modeling of surface growth on deformable
substrates, \cite{epstein} for a description of the kinetics of
boundary growth, \cite{lychev,truskinovsky,zurlo} for studies
 in  the setting of nonlinear elasticity\BBB, as well as \cite{tomassetti}
for a description of accretion on a hard spherical surface.  A
finite strain model combining accretion and ablation may also be found in 
\cite{sozio}.  For more detailed reviews, we refer to \cite{drozdov,naumov, sozioy}.

Compared with the extensive engineering literature, rigorous
mathematical results on the mechanics of growth, whether accretive or
volumetric,  are sparse \cite{bressan, morpho}. To the best of our
knowledge, the most significant mathematical efforts thus far have
primarily focused  either on performing numerical simulations or on
specifying the correct modeling framework for tailored applications, see, e.g., \cite{barreira-et-at,egberts-et-al,ganghoffer-et-al}.

%%%Old version: 
%Compared with the vast engineering literature, rigorous mathematical
%results on growth mechanics are scant \cite{bressan, morpho}. To the best of our knowledge, the biggest mathematical efforts have so far mostly been addressed to specifying the mechanical assumptions and performing numerical simulations \dots
In this paper, we revisit an accretive-growth model advanced by
{\sc  Zurlo and 
  Truskinovski} \cite{zurlo}.
 Accretive growth  is described by specifying a set-valued
time-dependent function $t \in [0,T] \mapsto \Omega(t) \subset
\Rz^d \, (d\in \Nz)$, identifying
at each time $t$ the reference configuration of the body under study. 
Such function is
increasing in time with respect to set inclusion (growth) and has 
open and connected values. Assume for the time being that $\Omega(t)$
is known (note however that this will also be an unknown later
on). One can equivalently describe the evolution of the
time-dependent reference configuration by means of a 
time-of-attachment  continuous function $\theta:
\Rz^d\to [0,\infty)$, whose value $\theta(x)$ indicates the time at
which the point $x$ is {\it added} to the solid.  Correspondingly, one
defines $\Omega(0):={\rm int} \{x\in \Rz^d\ : \ \theta(x)=0\}$
(interior part)
and 
$$
\Omega(t) := \{x \in \Rz^d \ : \ \theta(x)<t\} \quad \text{for all } t>0. 
$$
Let us stress from the very beginning that the growth process may prevent $\Omega(t)$ from being 
regular at specific times, posing a challenge to the analysis.  
The mechanical state of the body is then described by its
{\it displacement} $u(\cdot,t):\Omega(t) \to \Rz^d$ from the
time-dependent reference configuration $\Omega(t)$.

Growth and mechanical equilibration generally occur on very distinct
time scales. The usual time frame for growth ranges between minutes and
months, whereas mechanical equilibrations can take up to few milliseconds,
depending on the material. This basic observation leads us to
consider the equilibrium of the growing object in its quasistatic
approximation, namely,
\begin{equation}
  \label{eq:quasi}
-\nabla \cdot \sigma (x,t) = f(x,t)  \quad \text{for} \ x \in
\Omega(t),  \ t \in [0,T]
\end{equation}
 where $\sigma$ indicated the {\it stress}. 
We assume a linear elastic response in the solid. As
accretive growth is known to  generate residual stresses,
 following \cite{zurlo} we postulate
the constitutive relation
\begin{equation}
  \label{eq:sigma}
  \sigma(x,t) = \CC \big(\epsi(u(x,t)) - \alpha(x)\big)  \quad \text{for} \ x \in
\Omega(t),  \ t \in [0,T].
\end{equation}
Here, $\CC$ is the symmetric and positive-definite {\it elasticity
  tensor}, the symmetrized displacement gradient $\epsi(u) = (\nabla u
+ \nabla u^\top)/2$ is the {\it strain}, and $\alpha:\Omega(T) \to
\Rz^{d\times d}$ denotes the {\it backstrain}, which has been
accumulated during growth. By assuming that material is added at the
boundary of the solid in a locally unstressed state, we would follow 
\cite{zurlo} and postulate
$$
  \alpha(x)= \epsi(u)(x,\theta(x))\quad \text{for} \ x \in \Omega(T). 
  $$
  In fact, together with \eqref{eq:sigma} this would entail that
  $\sigma(x,\theta(x))=0$. On the other hand, such %Such a
  position would require $\epsi(u)$ to admit a space-time trace at
points of the form $(x,\theta(x))$, a possibility which might be
impeded by the low regularity of $\Omega(t)$. We hence resort to a
mollification of the above position  by actually defining
\begin{equation}
  \alpha(x):= (K\ove \epsi)(u)(x,\theta(x))\quad \text{for} \ x \in \Omega(T),\label{eq:alpha}
\end{equation}
where $\ove \epsi(u)(\cdot,t)$ denotes the trivial extension of
$\epsi(u) (\cdot,t)$ to the whole $\Rz^d$ and $K$ is a space-time
convolution operator of the form
\begin{equation} (K\ove\epsi(u))(x,t):=\int_0^t\!\!\int_{\Rz^d}k(t-s)\phi(x-y)\ove\epsi(u)(y,s)\,
\d y\,  \d s\label{eq:K}
\end{equation}
for given time- and space-kernels $k \in  W^{1,1}(0,T)$ and $\phi\in
H^1(\Rz^d)$, respectively. From the modeling viewpoint, definition
\eqref{eq:alpha} links the residual growth-originated backstrain $\alpha(x)$ to
the {\it local mean} strain state at added material points, rather than to
the {\it pointwise} one. By choosing the supports of $k$ and $\phi$
sufficiently small around $0$, one has the possibility of arbitrarily localizing
this effect. Still, under the action of the (trivial
extension and) convolution operator one is allowed to take trace
values on the manifold $(x,\theta(x))$, without further regularity
restrictions.

Ideally, we would complement the equilibrium system \eqref{eq:quasi} by traction-free
boundary conditions
$$
\sigma(x,t)\,n(x,t) =0 \quad \text{for} \ x \in
\partial \Omega(t),  \ t \in [0,T].
$$
Still, as the growing set $\Omega(t)$ cannot be expected to be regular for
all times, a classical Sobolev trace on $\partial
\Omega(t)$ might be not available for some $t$.  Hence, the latter {\it natural}
condition will have to be casted variationally, within a weak reformulation
of \eqref{eq:quasi}-\eqref{eq:K}, see
\eqref{eq:eq1} below. To this
aim, in order to filter out rigid-body motions, some condition has to be
added to the equilibrium system \eqref{eq:quasi}-\eqref{eq:alpha}. As
the boundary $\partial \Omega(t)$ is evolving, in order to keep
notation to a minimum we ask for the {\it docking condition}
\begin{equation}
  \label{eq:boundary}
  u (x,t)= 0 \quad \text{on} \ \ \omega \times [0,T],
\end{equation}
where we have  fixed the  docking set $\omega\subset
\Omega(0)$. Condition \eqref{eq:boundary} bears some applicative
relevance, especially for $d=1$ or $2$. %Other conditions could be
                                %considered as well, possibly at the
                                %expense of additional notational complications.

Let us now turn our attention to the accretion process. Here, we
intend to model a situation where accretion results from deposition at
the boundary, at a given rate. Correspondingly, a point $x(t)\in
\partial \Omega (t)$ at the
boundary is assumed to follow the {\it normal accretion} law
$$ \dot x(t) = \gamma\, n (x(t))$$
where $n(x(t))$ indicates the outward normal to $\partial \Omega (t)$ at
$x(t)$ and $ \gamma$ is the growth rate, which will be later assumed
to be dependent on position and strain. Note that the evolution of
$\Omega(t)$ depends on its intrinsic geometry via $n(x(t))$. As the
level sets of  the 
function $\theta $ correspond by definition to the sets $\Omega(t)$,
one formally has that $n(x(t)) = \tfrac{\nabla \theta(x(t))}{|\nabla
\theta(x(t))|}$. At the same time, by differentiating the equality
$t=\theta(x(t)) $ with respect to time, one gets 
$$
1 = \nabla
\theta(x(t)) \cdot \dot x(t) = \nabla
\theta(x(t)) \cdot \gamma\, n (x(t))= \gamma \, \nabla
\theta(x(t)) \cdot \frac{\nabla
\theta(x(t))}{|\nabla
\theta(x(t))|} = \gamma |\nabla
\theta(x(t))|
$$
 so that $\theta$ ultimately solves the eikonal equation
$\gamma|\nabla\theta|=1$. 
Growth processes are known to be inhomogeneous and to be
dependent on the deformation state~\cite{goriely}. We model this by
letting the growth rate $\gamma$ depend smoothly on the point $x(t)$ and
the strain at $x(t)$. Note that no dependence on the stress is directly
accounted for by this model. In fact, in the nonregularized case $\alpha(x(t))= \epsi(u)(x(t),t)$, position \eqref{eq:sigma}
would imply
$\sigma(x(t), t) = 0$ at $x(t)\in \partial \Omega(t)$. Additional
dependencies of the growth rate $\gamma$ on time and displacement
could also be considered, at the cost of
 minor albeit tedious changes. 

Starting from the datum $\theta=0$ on $\Omega(0)$, the evolution of
$\Omega(t)$ is hence determined by solving the generalized eikonal equation
\begin{equation}
  \label{eq:eikonal}
 \gamma(x,\alpha(x)) |\nabla \theta(x)|=1.
\end{equation}
This equation is in principle to be solved on $\Omega(T)$ only. Still,
as this set depends on the solution $\theta$ itself, one may
conveniently solve \eqref{eq:eikonal} is some larger set containing
$\Omega(T)$ (recall that
$\alpha$ from \eqref{eq:alpha} is actually defined everywhere in
$\Rz^d$). Equation \eqref{eq:eikonal}  does not admit classical
solutions. Moreover, strong solutions of \eqref{eq:eikonal} are not
unique. We hence resort to the viscosity-solution setting, where
equation \eqref{eq:eikonal}  turns out to be well-posed. Note that the continuity of $x\mapsto \gamma(x,\alpha(x))$
is needed in order to tackle problem \eqref{eq:eikonal} in the 
setting of viscosity
solutions. Such continuity calls for some smoothness of $\alpha$, which is
in turn guaranteed by our positions \eqref{eq:alpha}-\eqref{eq:K}.

The main result of the paper, Theorem \ref{thm:coupled}, provides the
existence of a  weak local-in-time solution to the coupled equilibrium-growth problem
\eqref{eq:quasi}-\eqref{eq:eikonal}. Note that our level-set
formulation via $\theta$ allows us to consider the evolution problem
beyond singularities, which occur as the growing body self-touches.
As an intermediate  step toward  Theorem \ref{thm:coupled} we discuss the 
 global well-posedness %of the pure growth process \eqref{eq:eikonal} for
% given $\alpha$ in Proposition \ref{prop:eikonal} and
of the equilibrium problem
\eqref{eq:quasi}-\eqref{eq:boundary} for given $\theta$, see Theorem \ref{prop:equi}.
Compared with the analysis in \cite{zurlo}, the novelty of our result
is twofold. At first, we do not assume to know the displacement at
the added material point. Secondly, we do not assume the
evolution $t \mapsto \Omega(t)$ to be known, but rather solve for it,
taking into account mechanical couplings and the possible onset of
singularities.

 The paper is organized as follows: in Section \ref{sec:setting-etc} we specify our assumptions and state our main results. Section \ref{sec:equi} is devoted to the proof of Theorem \ref{prop:equi}, whereas Theorem \ref{thm:coupled} is proven in Section \ref{sec:coupled}. Eventually, in Section \ref{sec:korn} we prove a uniform Korn inequality for the class of sets generated by our growth process.

\section{Setting and main results}
\label{sec:setting-etc}

We devote this section to the specification of the problems under
scrutiny.  In particular, we introduce the assumptions and discuss
some preliminary remarks.   %, the discussion of the assumptions,
% and some preliminary remark.
The statements of our main results are in  Subsection 
\ref{sec:statements} below. 
We start by collecting  % preliminary collect
some notation which will be used throughout the paper. 

\subsection*{Notation} 
Let $d\in \Nz$. We indicate by $B_r(x_0):=\{x \in \Rz^d \ : \ |x-x_0|<r\}$ the open ball in $\Rz^d$ centered
in $x_0\in \Rz^d$ with
radius $r>0$.  By $C^0_{b}(\Rz^d;\Rz^{d\times d})$ we denote the space of bounded continuous functions on $\Rz^d$ with values in $\Rz^{d\times d}$. %, equipped with the standard supremum norm. 
 The $d$-dimensional Lebesgue measure of a 
measurable set $\Omega$
in $\Rz^d$ is denoted by $|\Omega|$. 
The symbol $a\cdot b$ classically indicates the scalar product between the two
vectors $a,\, b \in \Rz^d$. The contraction between 2-tensors $A,\, B\in \Rz^{d\times d}$ is
denoted by $A:B = A_{ij}B_{ij}$, where repeated indices are tacitly
summed over. Given the 4-tensor $\mathbb{C}\in \Rz^{d\times d\times
d \times d}$, we let 
$(\mathbb{C} : A)_{ij} = \mathbb{C}_{ijkl}A_{kl}$. We indicate by
${\mathbb I}$ the identity 4-tensor. The distance between $x\in \Rz^d$ and
the nonempty set $U\subset \Rz^d$ is denoted by ${\rm dist}\,(x;U)=\inf_{u\in
  U}|x-u|$. The  same notation is also used for the Hausdorff distance
between two nonempty sets $U_1,\, U_2 \subset \Rz^d$, namely,
${\rm dist}\, (U_1;U_2)=\max\{\sup_{u_1\in U_1}{\rm dist}\, (u_1;U_2), \sup_{u_2\in
  U_2}{\rm dist}\, (U_1;u_2),\}$. We say that $U_n \to U$ {\it in the Hausdorff sense}
iff ${\rm dist}\, (U_n;U)\to 0$.

\subsection{Assumptions  and notion of weak
  solution}\label{sec:assumptions}
 In this section, we present our assumptions on data and introduce
the notion of weak solution to problem
\eqref{eq:quasi}-\eqref{eq:eikonal}. Let us first recall the definition of a John domain. 

\begin{definition}[John domain]
 A nonempty open set $U \subset \Rz^d$ is said to be a 
\emph{John domain} if there exists a specific  point $x_0 \in U$ and a 
\emph{John constant} $C_J \in (0,1]$ such that for all points $x\in U$ one can find an
arc-length parametrized
curve $\rho: [0,L_\rho] \to U$ with $\rho(0)=x$,
  $\rho(L_\rho) = x_0$, and ${\rm dist} \,(\rho(s); \partial U) \geq
  C_J  s$ for all $s\in [0,L_\rho]$.
\end{definition}

John domains have been introduced in \cite{JohnF},  see
Figure \ref{john_figure}. 
Their name has been proposed in \cite{martio}. Note that John
domains are connected and their boundary is negligible \cite[Corollary 2.3]{koskela-rohde}. All Lipschitz domains are John, 
whereas John domains may have fractal boundaries or internal
cusps. External cusps are nonetheless excluded. 
%For instance the region inside the von Koch snowflake is a John domain.
We refer to \cite{hajlasz-koskela} and the references therein for an
overview on some important features of John domains. 
\begin{figure}[h]\centering
  \pgfdeclareimage[width=85mm]{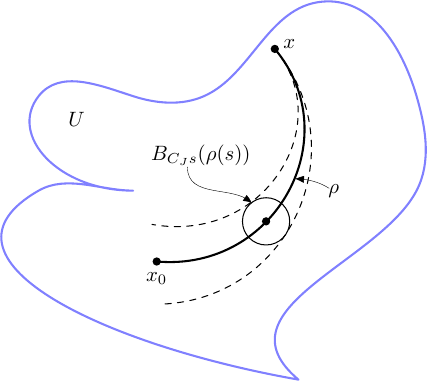}{john_figure}
  \begin{center}
    \pgfuseimage{john_figure}
  \end{center}
  \caption{A John domain in $\Rz^2$}
  \label{john_figure}
\end{figure}

% Here John refers to \cite{JohnF} who used this condition to study problems in elasticity theory; Martio and Sarvas \cite{martio} introduced this terminology. The definition says, roughly speaking, that each point in a John domain can be connected to the central point without getting too close to the boundary. 

% Note that John domains are connected. Note also that the requirements for being a John domain are much weaker compared to those for being Lipschitz domains. In particular, every Lipschitz domain is a John domain and John domains may have fractal boundaries or internal cusps, while the formation of external cusps is excluded. For instance the region inside the von Koch snowflake is a John domain.

% We refer to \cite{hajlasz-koskela} and the references therein for an overview on some important features of John domains. \EEE
% \MMM\texttt{E: we might add a picture here}\EEE\\

%We assume the following to hold:
\noindent The following assumptions will be used throughout the  paper
without further mention:
\begin{align}
&T>0, \label{a:T}\tag{A1}\\
&\label{a:TC} \CC \in \Rz^{d\times d \times d \times d}\ \ \text{is
                              symmetric with} \ \  \CC \geq c_*
                              {\mathbb I} \ \ \text{for
                               some} \ c_*>0,\tag{A2}\\
  & \label{a:Omega} \Omega(0) \subset \Rz^d  \ \ \text{is a bounded
    John domain},\tag{A3} \\
  & \label{a:omega}  \omega \subset \subset \Omega(0) \ \ \text{is
    nonempty, open, and connected and} \ \ {\rm dist}\,(\omega;
    \partial \Omega(0)) =:\rho_0>0,  \tag{A4} \\
  & \label{a:gamma} \gamma\in C^{ 0 \EEE,1}(\Rz^d \times \Rz^{d
    \times d} ; \Rz) \ \ \text{with} \ \ \gamma_* \leq \gamma (\cdot)
  \leq\gamma^* \ \ \text{for some} \ \  0<\gamma_* <\gamma^*, \tag{A5}\\
  & \label{a:K} k \in  W^{1,1}(0,T), \  \ \phi\in  H^1 
    (\Rz^d) \ \ \text{with compact  support}, \tag{A6}  \\
  & \label{a:f} f \in  C^0([0,T] \EEE ;L^2(\Rz^d;\Rz^d)),\tag{A7}
\end{align}
%%%%%
%Old version via align : 
%\begin{align}
%  & T>0, \label{a:T}\\[0.1cm]
%&\CC \in \Rz^{d\times d \times d \times d} \
%    \text{symmetric with $\CC \geq c_* {\mathbb I}$ for $c_*>0$,
 % where ${\mathbb I}$ is the identity tensor},\label{a:TC}\\[0.1cm]
%  &   \Omega(0) \subset \Rz^d \ \ \text{bounded  John domain\EEE}, \label{a:Omega}\\[0.1cm]
%  & \omega \subset \subset \Omega(0) \ \ \text{nonempty and open}, \label{a:omega}\\[0.1cm]
%  & \gamma\in C^{ 0 \EEE,1}(\Rz^d \times \Rz^{d \times d} ; \Rz) \ \ \text{with} \ \ 0< \gamma_* \leq \gamma (\cdot) \leq
 %   \gamma^*, \label{a:gamma}\\[0.1cm]
 %% & Q \subset \Rz^d \ \ \text{bounded, closed, and connected with} \ \ \Omega(0) +   B_{ \gamma_*T}(0)\subset Q, \label{a:Q}\\
%  & k \in  W^{1,1} \EEE (0,T), \ \phi\in  H^1 \EEE (\Rz^d) \
%    \text{ with compact  support}, \label{a:K}\\[0.1cm]
%  & f \in  C^0([0,T] \EEE ;L^2(\Rz^d;\Rz^d)), \label{a:f}
%\end{align}
%%%%%%%
where the inequality in \eqref{a:TC} is meant in the sense of the L\"owner order. 
%In the rest of the paper, \eqref{a:T}-\eqref{a:f}  are tacitly
%assumed,  
%without further mention.
 Given the kernels $k$ and $\phi$ from \eqref{a:K}, we define the
space-time nonlocal operator $K$ as 
\begin{equation}
  (Ke)(x,t):=\int_0^t\!\!\int_{\Rz^d}k(t-s)\phi(x-y)e(y,s)\,
    \d y\,  \d s \qquad \text{for all } e \in L^1( \Rz^d \times (0,T) ;\Rz^{d\times d}).\label{a:K2}
\end{equation}  
%  Note that our choice 
%   \eqref{a:T}-\eqref{a:f} is motivated by the sale of simplicity,
%   rather than maximal generality. At the expense of some more involved
% arguments, \eqref{a:T}-\eqref{a:f} could be weakened.
For any nonempty open set $\Omega  \subset \subset \Rz^d$ with $\omega
\subset \Omega$ we use the notation
$$H^1_\omega (\Omega;\Rz^d)\coloneqq \{u\in H^1
(\Omega;\Rz^d):\,u=0\text{ on }\omega\}.$$
 Moreover, we indicate by $\ove \epsi$ the trivial extension to
$\Rz^d$ of a measurable function $\epsi $ defined on $\Omega$. 

The weak formulation of problem \eqref{eq:quasi}-\eqref{eq:eikonal}
reads 
\begin{align}
   &u(\cdot,t) \in  H^1_\omega(\Omega(t);\Rz^d) \ \  \text{and} \
     \ \nonumber \\
   &\qquad \int_{\Omega(t)}  \CC (\epsi (u(x,t)) -\alpha(x)):\epsi(v(x)) \,\d x =
     \int_{\Omega(t)}f(x,t)\cdot v(x)\, \d x \nonumber \\
   &\qquad
      \text{for all } v \in  H^1_\omega(\Omega(t);\Rz^d), \ \text{for
     a.e.} \ t \in (0,T),\label{eq:eq1}\\[2mm] 
 &\alpha(x) = K\ove \epsi(u) (x,\theta(x)) \quad \text{for all } x \in
                                           \Rz^d,   \label{eq:eq2}\\[2mm]
  & \gamma (x,\alpha(x)) |\nabla (-\theta)(x)| =1  \ \  \text{in the
    viscosity sense in} \
     \Rz^d\setminus \overline{\Omega(0)}, \label{eq:theta1}\\[2mm]
 & \theta=0  \quad \text{on} \ \Omega(0).\label{eq:theta2}
\end{align}
In the following, given $\alpha \in C^0_{b}(\Rz^d;\Rz^{d\times
  d})$, equation \eqref{eq:theta1} will be solved in the following
viscosity sense. 
\begin{definition}[Viscosity solution] Let $\alpha \in C^0_{b}(\Rz^d;\Rz^{d\times
  d})$ be given and $\theta :\Rz^d \to [0,\infty)$ be continuous. 
 
 We say that $\theta$ is a \emph{viscosity subsolution} of \eqref{eq:theta1} if for
all $x_0\in\Rz^d \setminus \overline{\Omega(0)}$ % with $\theta(x_0)<
% T$
and any smooth function
$\varphi$ with $\varphi(x_0)=-\theta(x_0)$ and $\varphi\geq-\theta$ in a neighborhood of $x_0$, it holds that $ \gamma(x_0,\alpha(x_0))|\nabla\varphi(x_0)|\leq 1$. 

Similarly, we say that $\theta$ is a \emph{viscosity supersolution} of \eqref{eq:theta1} if for
all $x_0\in\Rz^d\setminus \overline{\Omega(0)}$ %with $\theta(x_0)< T$
and any smooth function
$\varphi$ with $\varphi(x_0)=-\theta(x_0)$ and $\varphi\leq-\theta$ in a
neighborhood of $x_0$, it holds that $ 
 \gamma(x_0,\alpha(x_0)) |\nabla\varphi(x_0)|\geq 1$. 

 Finally, $\theta$ is said to be a \emph{viscosity solution} of 
 \eqref{eq:theta1} if it is both a viscosity sub- and supersolution.
\end{definition}

%%%%%%%%Old version:
%Recall that the continuous function $\theta$ is a viscosity
%subsolution (supersolution, respectively) of \eqref{eq:theta1} if for
%all $x_0\in\Rz^d$ with $\theta(x_0)< T$ \EEE and any smooth function
%$\varphi$ with $\varphi(x_0)=-\theta(x_0)$ and $\varphi\geq-\theta$ in a
%neighborhood of $x_0$ ($\varphi\leq-\theta$, resp.) one has 
%$ \gamma(x_0,\alpha(x_0)) \EEE |\nabla\varphi(x_0)|\leq 1$ ($ 
% \gamma(x_0,\alpha(x_0)) \EEE |\nabla\varphi(x_0)|\geq 1$, resp.),
%and that $\theta$ is a viscosity solution if it is both a
%sub- and a supersolution.
%%%%%%%%%%%%%%%%

Let us record the following fact.
\begin{lemma}
\label{lemma:set-meas} For all $\theta:\Rz^d \to [0,\infty)$
continuous the set $Q_S\coloneqq \cup_{t\in
    (0,S)} \Omega(t) \times \{t\}$ is measurable for every $S\in (0,T]$.
\end{lemma}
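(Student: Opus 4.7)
The plan is to show that $Q_S$ is in fact open (hence Borel measurable) as a subset of $\Rz^d \times \Rz$. The key observation is that membership in $Q_S$ is characterized by a single continuous inequality: by the very definition of $\Omega(t) = \{x : \theta(x) < t\}$ for $t>0$, we have
$$
Q_S = \{(x,t) \in \Rz^d \times (0,S) : \theta(x) < t\}.
$$

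Concretely, I would introduce the auxiliary function $g : \Rz^d \times \Rz \to \Rz$ defined by $g(x,t) := \theta(x) - t$. Since $\theta$ is continuous on $\Rz^d$ and the map $(x,t) \mapsto t$ is continuous, $g$ is continuous on $\Rz^d \times \Rz$. Then
$$
Q_S = g^{-1}\bigl((-\infty,0)\bigr) \cap \bigl(\Rz^d \times (0,S)\bigr),
$$
which is the intersection of two open sets and therefore open in $\Rz^d \times \Rz$. In particular, $Q_S$ is Borel, hence Lebesgue measurable.

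There is no real obstacle here: the continuity of $\theta$ (which is part of the standing hypothesis on the time-of-attachment map) does all the work, and the sub-level-set structure of $\Omega(t)$ makes $Q_S$ manifestly open. I would simply record this short argument, possibly noting in passing that the same reasoning shows each time-slice $\Omega(t)$ to be open, as already used elsewhere, and that the set $\{(x,t) : \theta(x) \le t\}$ (the closure-type variant) is correspondingly closed.
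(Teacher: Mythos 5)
Your argument is essentially the paper's proof: both rewrite $Q_S$ as $\{(x,t)\in\Rz^d\times(0,S):\theta(x)-t<0\}$ and observe this is the preimage of $(-\infty,0)$ under the map $(x,t)\mapsto\theta(x)-t$, intersected with $\Rz^d\times(0,S)$. You push one small step further by noting that this map is continuous (not merely measurable), so $Q_S$ is in fact open, which is a mild and correct strengthening of the paper's conclusion.
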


 \begin{proof}
 Fix $S\in (0,T]$. From the continuity, and hence the measurability of $\theta$, it follows that the extended map $\tilde{\theta}:\Rz^d\times \Rz\to\Rz$ defined as $\tilde{\theta}(x,t)\coloneqq \theta(x)$ is measurable. Analogously, the projection $\tau:\Rz^d\times \Rz\to\Rz$ given by $\tau(x,t)\coloneqq t$ is continuous, and thus measurable. The measurability of $Q_S$ follows then by observing that
 \begin{align*}
 Q_S&=\left\{(x,t)\in \Rz^d\times (0,S) :\,\theta(x)-t<0\right\}\\
 &=\left\{(x,t)\in \Rz^d\times (0,S) :\,(\tilde{\theta}-\tau)(x,t)<0\right\}\\
 &=(\tilde{\theta}-\tau)^{-1}(-\infty,0)\cap (\Rz^d\times (0,S) ).\qedhere
 \end{align*}
 \end{proof}

 Before going on, let us comment  on the two 
 subproblems \eqref{eq:eq1}-\eqref{eq:eq2} and \eqref{eq:theta1}-\eqref{eq:theta2}.  
At first, let us discuss the eikonal problem
\eqref{eq:theta1}-\eqref{eq:theta2} by assuming to be given $\alpha\in C^0_{b}(\Rz^d;\Rz^{d\times d})$. As $x\mapsto \gamma(x,\alpha(x))$ is continuous, bounded,
and well-separated from $0$,  in view of \eqref{a:gamma}, this
eikonal problem  admits a unique viscosity solution $\theta$, cf. 
\cite{Bardi,Barles}.
In fact, the solution $\theta$ of \eqref{eq:theta1}-\eqref{eq:theta2}
is Lipschitz continuous with
\begin{equation}
\label{eq:gradtheta}
  0<\frac{1}{\gamma^*} \leq |\nabla \theta | \leq \frac{1}{\gamma_*} \
  \  \text{in} \ \ \Rz^d \setminus \overline{\Omega(0)}. 
\end{equation}
%By $\Omega(T) =\{x \in \Rz^d\,:\, \theta(x)<T\}$, 
A first consequence of these inequalities is that $\Omega(T)$ is
bounded independently of $\alpha$. Indeed, one has that
$\Omega(T)
\subset \Omega(0) + B_{T \gamma^*  }(0)$, cf. \cite{john}.  
In the following, we can hence
assume to be given a fixed bounded
open set $D \subset \Rz^d$ such that
\begin{equation}
  \label{eq:omegabound}
  \Omega(t)
\subset D \quad \text{for all } t\in [0,T]
\end{equation}
for any solution of \eqref{eq:theta1}-\eqref{eq:theta2}, namely,
independently of $\alpha$. As the support of $\phi$ is compact, see
\eqref{a:K}, $D$ can be assumed to be large enough that the trivial
$\ove \epsi$
extension of $\epsi: \Omega(t) \to \Rz^{d\times d}$ can be considered
to be defined on $D$, with no loss of generality and without
introducing new notation.

Note moreover that problem \eqref{eq:theta1}-\eqref{eq:theta2}  is stable with
respect to data convergence. More precisely, if $\alpha_n \to \alpha $
 locally uniformly, then $\gamma(\cdot, \alpha_n (\cdot)) \to \gamma(\cdot,
\alpha (\cdot)) $ locally uniformly as $\gamma$ is Lipschitz continuous  by \eqref{a:gamma}, and
$\theta_n \to \theta$ locally uniformly, where $\theta_n, \, \theta$ are the
solutions of \eqref{eq:theta1}-\eqref{eq:theta2} corresponding to
$\alpha_n,\,\alpha$, respectively. The reader is referred to
\cite{userguide} or to Section \ref{sec:coupled} below.

As $\Omega(0)$ is a John domain, \cite[Theorem 1.1]{john}
ensures that all sublevels $\Omega(t)  $
are John domains, as well. More precisely,
if  $\Omega(0)$ is a John domain with respect to the point $x_0\in
\Rz^d$ with John constant  $C_0$  then all $\Omega(t) $ are John domains with respect to the same
point $x_0$ and with John constant at least $C_J:=\min\{1, C_0\}
\tfrac{\gamma_*}{(2\gamma^* + \gamma_*)}$.
In particular, one has that $\Omega(t) \in \Theta$  for all $t\in
[0,T]$, where 
\begin{align}
\notag\Theta\coloneqq \big\{ &\Omega  \subset\subset D\ \, : \,
                         \Omega \ \text{is a John domain with
                               respect to the point $x_0\in\omega
                               $,}\\
                         & \qquad \text{with John constant $C_J$, and
                         ${\mathrm{dist}\,(\omega;\partial\Omega)}\geq \rho_0>0$}\}.\label{eq:Theta}
\end{align}
This is a crucial observation, for it entails the validity of a
uniform Korn inequality. Recall that for  any given John
domain 
$\Omega \subset \Rz^d$ there exists a constant $C_{\rm Korn}$ such that
 \begin{equation}
\label{eq:Korn}
 \| \nabla u \|_{L^2(\Omega;\Rz^{d\times d})} \leq C_{\rm Korn}\|
 \epsi(u)\|_{L^2(\Omega;\Rz^{d\times d})} \quad \text{  for all } u \in H^1_0(\Omega;\Rz^d).
 \end{equation}
  In fact, the validity of the Korn inequality is actually equivalent
 to $\Omega$ being John in the special class of domains
 fulfilling the so-called {\it separation property}
 \cite{acosta,jiang}. This include simply connected planar domains \cite{buckley}.

Note that the constant in the Korn
 inequality \eqref{eq:Korn} depends on $\Omega$ only. More precisely,
$C_{\rm Korn}=C_{\rm Korn}( C_{J}, d(x_0, \partial \Omega),{\rm
  diam}(\Omega))$, see \cite[Theorem 4.1]{acosta}.  In the following,
we use the fact that the Korn constant is actually uniform on
$\Theta$.  %for uniformly bounded domains having the same John
               %constant, containing acommon open bounded set, and
               %being John with respect to the same point.
In particular, we have the following.

\begin{proposition}[Uniform Korn inequality]\label{UKI}
Let $\omega, D\subset \Rz^d$ be open bounded domains, with
$x_0\in\omega\subset\subset D$, and $|\omega|>0$.    Define $\Theta$ as in
\eqref{eq:Theta}. %   Let also $x_0\in \Rz^d$. Denote by $\Theta$ the following class of subsets of $\Rz^d$:
% \begin{align*}
% \Theta\coloneqq \big\{ &\Omega:\,\Omega\text{ is an open John domain  with respect to }x_0, \text{ it has John constant }C_{J},\\
% &\text{ and is such that }\omega\subset\bar{\Omega}\subset D\big\},
% \end{align*}
% and set $$H^1_\omega (\Omega;\Rz^d)\coloneqq \{u\in H^1 (\Omega;\Rz^d):\,u=0\text{ on }\omega\}.$$
Then, there exists a constant $ C_{\Theta}=C_{\Theta}(C_J,x_0,\omega, D,\rho_0)$ such that for every $\Omega\in \Theta$ and every $u\in H^1_\omega(\Omega;\Rz^d)$ there holds
\begin{equation}
\label{eq:unif-korn}
\| \nabla u \|_{L^2(\Omega;\Rz^{d\times d})} \leq C_{\Theta}\|
 \epsi(u)\|_{L^2(\Omega;\Rz^{d\times d})}.
\end{equation}
\end{proposition}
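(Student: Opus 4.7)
The plan is to reduce the claim to a \emph{quantitative} second Korn inequality on John domains, and then to exploit the docking condition $u=0$ on $\omega$ to kill the rigid-motion degeneracy. Concretely, for any $\Omega \in \Theta$ and any $u \in H^1(\Omega;\Rz^d)$, there exists an infinitesimal rigid motion $r(x) = a + Bx$ (with $a \in \Rz^d$ and $B \in \Rz^{d\times d}$ skew) such that
\[
\|u - r\|_{H^1(\Omega;\Rz^d)} \leq C_1\, \|\epsi(u)\|_{L^2(\Omega;\Rz^{d\times d})}.
\]
By \cite[Theorem 4.1]{acosta}, the constant $C_1$ depends only on the John constant of $\Omega$, on $\mathrm{dist}(x_0,\partial\Omega)$, and on $\mathrm{diam}(\Omega)$. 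On $\Theta$ all three of these are uniformly controlled: $C_J$ is fixed by definition, $\mathrm{dist}(x_0,\partial\Omega) \geq \rho_0$ because $x_0 \in \omega$ and $\mathrm{dist}(\omega,\partial\Omega) \geq \rho_0$, and $\mathrm{diam}(\Omega) \leq \mathrm{diam}(D)$. Hence $C_1 = C_1(C_J,\rho_0,\mathrm{diam}(D))$ is uniform over $\Theta$.

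Next, I use $u=0$ on $\omega$: pointwise on $\omega$ one has $r = -(u-r)$, so
\[
\|r\|_{L^2(\omega;\Rz^d)} = \|u - r\|_{L^2(\omega;\Rz^d)} \leq \|u - r\|_{L^2(\Omega;\Rz^d)} \leq C_1\, \|\epsi(u)\|_{L^2(\Omega;\Rz^{d\times d})}.
\]
Since the space of rigid motions is finite-dimensional and $\omega$ is open with $|\omega|>0$, a rigid motion vanishing on $\omega$ vanishes identically; therefore $r \mapsto \|r\|_{L^2(\omega;\Rz^d)}$ is a norm on this space, equivalent to $\|\cdot\|_{H^1(D;\Rz^d)}$ via some constant $C_2 = C_2(\omega,D,x_0)$ independent of $\Omega$. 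In particular, using $\Omega \subset D$,
\[
\|\nabla r\|_{L^2(\Omega;\Rz^{d\times d})} \leq \|\nabla r\|_{L^2(D;\Rz^{d\times d})} \leq C_2\, \|r\|_{L^2(\omega;\Rz^d)} \leq C_1 C_2\, \|\epsi(u)\|_{L^2(\Omega;\Rz^{d\times d})}.
\]
Combining with the triangle inequality $\|\nabla u\|_{L^2(\Omega;\Rz^{d\times d})} \leq \|\nabla u - \nabla r\|_{L^2(\Omega;\Rz^{d\times d})} + \|\nabla r\|_{L^2(\Omega;\Rz^{d\times d})}$ yields \eqref{eq:unif-korn} with $C_\Theta = C_1(1+C_2)$ depending only on the quantities listed in the statement.

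The main obstacle is Step~1: one must know that the second-Korn constant on a John domain depends only on the geometric parameters $(C_J, \mathrm{dist}(x_0,\partial\Omega), \mathrm{diam}(\Omega))$ and not on finer features of $\Omega$. This \emph{quantitative} dependence, supplied by \cite[Theorem 4.1]{acosta}, is exactly what matches the uniform data available on $\Theta$. Once the uniform $C_1$ is secured, the remainder of the argument is a routine finite-dimensional norm-equivalence trick, enabled by the fact that the docking set $\omega$ has positive measure and is contained in every $\Omega \in \Theta$.
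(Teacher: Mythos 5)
Your proof takes a genuinely different and substantially shorter route than the paper's. You reduce the claim to a quantitative second Korn inequality in the rigid-motion-subtraction form, uniformly over $\Theta$, and then use the docking condition $u=0$ on $\omega$ together with a finite-dimensional norm-equivalence argument to eliminate the rigid motion. The paper instead argues by contradiction and compactness: after normalizing $\|\nabla u_k\|_{L^2(\Omega_k)}=1$ and $\|\epsi(u_k)\|_{L^2(\Omega_k)}\le 1/k$, it passes to a Hausdorff-convergent subsequence $\ove\Omega_k\to K$, shows that $\Omega_\infty:=\mathrm{int}(K)$ is again a John domain, and then---using only the inequality $\|\nabla u\|_{L^2}^2\le \haz C_{\rm Korn}\bigl(\|u\|_{L^2}^2+\|\epsi(u)\|_{L^2}^2\bigr)$ from \cite{acosta}, whose constant is uniform on $\Theta$---proves that $\|u_k\|_{L^2(\Omega_k)}\to 0$ by decomposing $\Omega_k$ into a stable core and a vanishing-measure remainder, relying on V\"ais\"al\"a's exhaustion theorem and Bojarski's uniform Sobolev embedding. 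The paper's route is thus self-contained modulo the ``Korn plus $L^2$'' form; yours front-loads all the geometric difficulty into the rigid-motion form. Granted a uniform rigid-motion form, the rest of your argument (the observation that $r=-(u-r)$ on $\omega$, the finite-dimensional norm equivalence on rigid motions, and the triangle inequality) is correct and efficient.

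The one point to pin down is the citation supporting Step~1. You attribute the estimate $\|u-r\|_{H^1(\Omega)}\le C_1\|\epsi(u)\|_{L^2(\Omega)}$ with $C_1=C_1(C_J,\mathrm{dist}(x_0,\partial\Omega),\mathrm{diam}\,\Omega)$ to \cite[Theorem~4.1]{acosta}, but the paper invokes that theorem only for the $H^1_0$ inequality \eqref{eq:Korn} and (together with Theorem~4.2 there) for the ``Korn plus $L^2$'' form; it does not extract the rigid-motion form from \cite{acosta}, and indeed the entire compactness machinery of Section~\ref{sec:korn} exists precisely because the authors did not rely on it. Your step is sound in substance---the rigid-motion form does follow from the quantitative right inverse of the divergence in \cite{acosta} via the standard Ne\v{c}as-inequality argument, with no Rellich compactness required and hence no new uniformity issue---but as written that single citation silently carries most of what the proposition is asking to prove. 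You should either point to a statement of the rigid-motion form with explicit John-parameter dependence, or include the short derivation from the divergence estimate.
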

 We prove the uniform Korn inequality \eqref{eq:unif-korn} in
Section \ref{sec:korn}. This inequality is paramount for
studying the 
variational equation \eqref{eq:eq1}. By assuming  to be given $t\in [0,T]
\mapsto \Omega(t) \in \Theta$,  as well
as $\alpha\in L^2(D; \Rz^{d\times d})$, one can uniquely
solve \eqref{eq:eq1} for all times $t\in [0,T]$ by means of the
standard Lax-Milgram Lemma, as the coercivity of the corresponding bilinear
form follows from \eqref{eq:unif-korn}.

\subsection{Main results}\label{sec:statements}
 Recall that \eqref{a:T}-\eqref{a:f}  from Section \ref{sec:assumptions} are
assumed throughout. 
% \texttt{E: I think that the set $Q$ and each subset of it are actually always measurable. I am putting a proof in the following lemma. It is taken from: \\
%\href{https://math.stackexchange.com/questions/1565831/measurability-of-e-x-y-fxy}%{https://math.stackexchange.com/questions/1565831/measurability-of-e-x-y-fxy}}\\ 

We are now in the position to state our main results. 

\begin{theorem}[Equilibrium, given the growth]\label{prop:equi}
  Let $\theta : \Rz^d \to [0,\infty) $ be given, so that the
  corresponding set-valued map $t\in [0,T] \mapsto \Omega(t) := \{
  x \in \Rz^d \ : \ \theta(x)<t\} $
  takes values in $ \Theta$. Setting $Q:=\cup_{t\in
    (0,T)} \Omega(t) \times \{t\}$,
  % $\theta: \Rz^d \to [0,\infty)$ be
  % Lipschitz continuous with $1/\gamma^*\leq |\nabla \theta|\leq 1/\gamma_* $ almost everywhere  and such that
  % $\theta(x)=0$ iff $x \in \Omega(0)$. For all $t \in (0,T)$ let $\Omega(t):= \{
  % x \in \Rz^d \, : \, \theta(x)<t\} $ be connected and satisfy
  % the interior ball condition for $\rho>0$.
  there exists a
  unique  measurable function $u: Q\to \Rz^d$ with $ 
u(\cdot,t)\in H^1_{ \omega}(\Omega(t);\Rz^d)$ for almost every $t\in (0,T)$ and
$t \mapsto \|u(\cdot,t)\|_{H^1} \in L^\infty(0,T)$ solving the equilibrium system
\eqref{eq:eq1}-\eqref{eq:eq2}.
\end{theorem}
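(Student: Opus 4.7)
The plan is to recast the coupled system \eqref{eq:eq1}-\eqref{eq:eq2} as a fixed-point equation for the strain field and close it via an iterated Volterra-type contraction, exploiting the causal structure $\theta(x)\leq t$ on $\Omega(t)$, along with the $t$-uniform Korn inequality of Proposition \ref{UKI}.

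Set $\mathcal{E}:=L^\infty(0,T;L^2(D;\Rz^{d\times d}))$. To each $e\in \mathcal{E}$ associate the backstrain $\alpha_e(x):=(K\bar e)(x,\theta(x))$ for $x\in\Rz^d$. Using $\phi\in L^2(\Rz^d)$, $k\in L^\infty(0,T)$ (via \eqref{a:K} and $W^{1,1}\hookrightarrow C^0$), and the continuity of $\theta$, one checks that $\alpha_e\in C^0_b(\Rz^d;\Rz^{d\times d})$ with $\|\alpha_e\|_{L^\infty(\Rz^d)}\leq C\|e\|_\mathcal{E}$. For every $t\in(0,T)$, since $\Omega(t)\in\Theta$, Proposition \ref{UKI} combined with Poincar\'e's inequality (uniform because $u=0$ on the fixed set $\omega$) and \eqref{a:TC} makes the bilinear form on the left-hand side of \eqref{eq:eq1} coercive on $H^1_\omega(\Omega(t);\Rz^d)$ with a constant independent of $t$. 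Lax--Milgram then yields a unique $u_e(\cdot,t)\in H^1_\omega(\Omega(t);\Rz^d)$ solving \eqref{eq:eq1} and obeying
\[
\|u_e(\cdot,t)\|_{H^1(\Omega(t))}\leq C\bigl(\|f(\cdot,t)\|_{L^2(D)}+\|\alpha_e\|_{L^2(D)}\bigr)\leq C(1+\|e\|_\mathcal{E})
\]
uniformly in $t$. Setting $\Phi(e)(\cdot,t)$ to be the trivial extension to $D$ of $\epsi(u_e(\cdot,t))$ defines a self-map $\Phi:\mathcal{E}\to \mathcal{E}$ whose fixed points give solutions of \eqref{eq:eq1}-\eqref{eq:eq2}.

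For $e_1,e_2\in \mathcal{E}$, subtracting the two variational equations and testing against $u_{e_1}-u_{e_2}\in H^1_\omega(\Omega(t);\Rz^d)$ together with \eqref{a:TC} yields
\[
\|\epsi(u_{e_1}-u_{e_2})(\cdot,t)\|_{L^2(\Omega(t))}\leq C\,\|\alpha_{e_1}-\alpha_{e_2}\|_{L^2(\Omega(t))}.
\]
On the other hand, Cauchy--Schwarz in the spatial variable in \eqref{a:K2}, together with $k\in L^\infty(0,T)$ and $\theta(x)\leq t$ on $\Omega(t)$, gives the Volterra-type bound
\[
\|\alpha_{e_1}-\alpha_{e_2}\|_{L^2(\Omega(t))}^2\leq C\int_0^t \|e_1(\cdot,s)-e_2(\cdot,s)\|_{L^2(D)}^2\,\d s.
\]
A straightforward induction based on these two inequalities yields
\[
\|\Phi^n(e_1)(\cdot,t)-\Phi^n(e_2)(\cdot,t)\|_{L^2(D)}^2\leq \frac{(Ct)^n}{n!}\|e_1-e_2\|_\mathcal{E}^2,
\]
so $\Phi^n$ is a strict contraction on $\mathcal{E}$ for $n$ large. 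Banach's fixed-point theorem in its iterated-contraction version produces a unique $e^*\in \mathcal{E}$, and $u(\cdot,t):=u_{e^*}(\cdot,t)$ is the unique solution of the theorem, with the claimed $L^\infty(0,T;H^1)$-bound following from the pointwise-in-$t$ estimate above.

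The main obstacle is to verify that $\Phi$ actually returns an element of $\mathcal{E}$, namely the measurability of $t\mapsto u_e(\cdot,t)$, since this solution lives in a $t$-dependent function space and off-the-shelf continuity theorems do not directly apply. I would address this by approximation: partition $(0,T)$ into $N$ equal subintervals and freeze the domain to $\Omega(t_j)$ on the $j$-th piece, producing a trivially measurable piecewise-constant-in-$t$ solution $u_e^N$; letting $N\to\infty$ is then stable thanks to the Hausdorff continuity of $t\mapsto \Omega(t)$ (a consequence of the Lipschitz bound \eqref{eq:gradtheta}) and the $t$-independent Lax--Milgram/Korn constants from Proposition \ref{UKI}. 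Joint measurability of the limit $u$ on $Q$ follows from strong measurability of $t\mapsto u(\cdot,t)$ into the separable space $L^2(D;\Rz^d)$. This is the only place where the genuinely $t$-dependent geometry has to be confronted directly; the Volterra contraction itself is essentially algebraic once uniform coercivity is in place.
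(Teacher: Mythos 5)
Your proposal is correct and takes a genuinely different route from the paper. The paper also argues by fixed point, but it first proves a contraction on a short interval $(0,T_0)$ with $T_0$ chosen so small that $\|k\|_{L^1((j-1)T_0,jT_0)}$ is small, and then concatenates solutions on $(0,T_0),(0,2T_0),\dots,(0,nT_0)$, exploiting causality to keep the already-constructed part of the solution fixed. You instead encode the causality $\theta(x)\le t$ on $\Omega(t)$ directly as a Volterra inequality
\[
\|\alpha_{e_1}-\alpha_{e_2}\|_{L^2(\Omega(t))}^2\le C\int_0^t\|e_1(\cdot,s)-e_2(\cdot,s)\|_{L^2(D)}^2\,\mathrm ds,
\]
which, together with the uniform Korn/Lax--Milgram bound, gives the $(Ct)^n/n!$ decay and hence that $\Phi^n$ is a contraction on all of $(0,T)$ in one shot; this avoids the interval subdivision entirely. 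Two small comments. First, your Volterra bound uses $k\in L^\infty(0,T)$ pointwise (which is fine, since $k\in W^{1,1}\hookrightarrow C^0$ under \eqref{a:K}); the paper's route needs only $k\in L^1$, a slightly weaker hypothesis for this theorem, as the authors explicitly note at the end of Section~3 -- but under the stated assumptions both arguments are valid. Second, you rightly flag the measurability of $t\mapsto u_e(\cdot,t)$ into $L^2(D)$ as the one technical point that needs an argument (the paper leaves it implicit); your sketch is plausible, although ``piecewise constant in $t$'' should be ``piecewise continuous in $t$'' since $f(\cdot,t)$ still varies on each frozen-domain subinterval, and the domain should be frozen at the right endpoint $\Omega(t_j)$ so that the restriction to $\Omega(t)\subset\Omega(t_j)$ lands in $H^1_\omega(\Omega(t))$. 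The convergence of the frozen-domain approximants then rests on a Mosco-type stability of the Lax--Milgram solution under the monotone domain perturbation $\Omega(t_j)\downarrow\Omega(t)$, which does hold here because the level sets $\{\theta=t\}$ are Lebesgue-null thanks to \eqref{eq:gradtheta}.
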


\begin{theorem}[Coupled equilibrium and growth]\label{thm:coupled}
   For $T>0$ small enough \EEE there exist  a Lipschitz
  continuous \EEE $\theta: \Rz^d
  \to [0,\infty)$ and  a measurable \EEE $u: Q\to
  \Rz^d$, with $ 
u(\cdot,t)\in H^1(\Omega(t);\Rz^d)$ for $\Omega(t):= \{
  x \in \Rz^d \ : \ \theta(x)<t\}$, $Q:=\cup_{t\in(0,T)} \Omega(t)
  \times \{t\}$ for almost every $t\in (0,T)$, solving the coupled equilibrium and
growth problem \eqref{eq:eq1}-\eqref{eq:theta2}.  
\end{theorem}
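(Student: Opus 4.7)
The plan is to cast the coupling \eqref{eq:eq2}-\eqref{eq:theta2} as a fixed-point equation for the backstrain $\alpha$ and apply Schauder's theorem. For $R>0$ to be fixed later, set
\[
\mathcal{B}_R := \{\alpha \in C^0(\ove D; \Rz^{d\times d}) \, : \, \|\alpha\|_\infty \leq R\}
\]
and define $F:\mathcal{B}_R \to C^0(\ove D; \Rz^{d\times d})$ as follows. Given $\alpha\in\mathcal{B}_R$, first solve the eikonal problem \eqref{eq:theta1}-\eqref{eq:theta2} with coefficient $\gamma(\cdot,\alpha(\cdot))$, obtaining a unique viscosity solution $\theta^\alpha$ with gradient bounds \eqref{eq:gradtheta} whose sublevels $\Omega^\alpha(t)$ lie in $\Theta$ by \cite[Theorem 1.1]{john}; then let $u^\alpha$ be the elastic solution provided by Theorem \ref{prop:equi}; finally set $F(\alpha)(x):=(K\ove\epsi(u^\alpha))(x,\theta^\alpha(x))$. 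A fixed point of $F$ will yield the desired pair $(u,\theta)$. Testing \eqref{eq:eq1} with $v=u^\alpha(\cdot,t)$, using the coercivity \eqref{a:TC} and the uniform Korn inequality of Proposition \ref{UKI}, one derives the $\alpha$-uniform bound $\|u^\alpha\|_{L^\infty(0,T;H^1)} \leq C_1(1+R+\|f\|_{C^0([0,T];L^2)})$. Since $k\in W^{1,1}(0,T)\hookrightarrow C^0$ and $\phi\in L^2$ has compact support, this gives $\|F(\alpha)\|_\infty \leq C_2\, T\,(1+R)$, so for $R$ fixed and $T$ small enough $F$ maps $\mathcal{B}_R$ into itself.

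The next ingredient is compactness. The smoothing properties of $K$, namely spatial convolution with a compactly supported $\phi\in H^1(\Rz^d)$ and temporal convolution with $k\in W^{1,1}(0,T)$, render $(x,t)\mapsto K\ove\epsi(u^\alpha)(x,t)$ equicontinuous on $\ove D\times[0,T]$, uniformly for $\alpha\in\mathcal{B}_R$. Combined with the uniform Lipschitz bound $|\theta^\alpha(x)-\theta^\alpha(y)|\leq |x-y|/\gamma_*$ from \eqref{eq:gradtheta}, the composition $x\mapsto F(\alpha)(x)$ is equicontinuous and uniformly bounded on $\ove D$, so Arzel\`a-Ascoli yields precompactness of $F(\mathcal{B}_R)$ in $C^0(\ove D;\Rz^{d\times d})$.

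The delicate step, which I expect to be the main obstacle, is the continuity of $F$. If $\alpha_n\to\alpha$ uniformly on $\ove D$, then by \eqref{a:gamma} also $\gamma(\cdot,\alpha_n)\to\gamma(\cdot,\alpha)$ uniformly, and the stability of viscosity solutions gives $\theta^{\alpha_n}\to\theta^\alpha$ uniformly on $\ove D$; in particular $\Omega^{\alpha_n}(t)\to\Omega^\alpha(t)$ in the Hausdorff sense for every $t$. One then has to pass to the limit in the elastic variational equation \eqref{eq:eq1} on domains that converge merely in the Hausdorff sense and are only of John type. The a priori bound and Proposition \ref{UKI} provide a weak-$L^2$ limit of $\ove\epsi(u^{\alpha_n})(\cdot,t)$, and the core analytical task is to identify it with $\ove\epsi(u^\alpha)(\cdot,t)$. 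This calls for a Mosco-type argument: smooth test functions on $\Omega^\alpha(t)$ are approximated on $\Omega^{\alpha_n}(t)$ by exploiting the John-domain structure together with Hausdorff convergence to localize supports, while the Dirichlet condition on $\omega$ is preserved since $\omega\subset\Omega(0)\subset \Omega^{\alpha_n}(t)$ for every $n$ and every $t>0$ by \eqref{a:omega}. Because convolution with $\phi\in H^1$ maps weak-$L^2$ convergence into uniform convergence on compacta (via Rellich-type compactness), one infers $K\ove\epsi(u^{\alpha_n})\to K\ove\epsi(u^\alpha)$ uniformly on $\ove D\times[0,T]$; combined with the uniform convergence of $\theta^{\alpha_n}$ this yields $F(\alpha_n)\to F(\alpha)$ uniformly. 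Schauder's fixed-point theorem then produces $\alpha^*=F(\alpha^*)$, and the pair $(u,\theta):=(u^{\alpha^*},\theta^{\alpha^*})$ solves \eqref{eq:eq1}-\eqref{eq:theta2}.
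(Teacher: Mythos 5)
Your proposal matches the paper's overall strategy: a Schauder fixed point on the backstrain $\alpha$, where each iteration solves the eikonal problem \eqref{eq:theta1}--\eqref{eq:theta2}, invokes Theorem~\ref{prop:equi} for the equilibrium, and forms the new $\alpha=K\ove\epsi(u)(\cdot,\theta(\cdot))$. The difference in the fixed-point set is cosmetic: the paper works with the $W^{1,\infty}$-ball $A=\{\|\alpha\|_{W^{1,\infty}(D)}\leq L\}$, which is already convex and compact in $C^0(D;\Rz^{d\times d})$, so the Schauder theorem applies without a separate compactness step, whereas you take the sup-norm ball $\mathcal{B}_R$ and prove precompactness of $F(\mathcal{B}_R)$; both ultimately rest on the same gradient bounds \eqref{4:4}--\eqref{4:5}, which use $k\in W^{1,1}$, $\phi\in H^1$ and $\|\nabla\theta\|_{\infty}\leq 1/\gamma_*$. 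One small inaccuracy: since Theorem~\ref{prop:equi} solves the \emph{coupled} system \eqref{eq:eq1}--\eqref{eq:eq2} for the given $\theta^\alpha$, the $\alpha$ entering the elastic equation is the output, not your input; after absorbing (as in \eqref{eta}--\eqref{4:3}) one obtains $\|\epsi(u^\alpha)\|_{E(T)}\leq M$ with $M$ data-dependent only, so the $R$ in your a priori bound should not appear — this only makes the self-mapping easier.

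The genuine gap is in the continuity of $F$. You correctly locate the obstacle — extracting a weak-$*$ limit of $\ove\epsi(u^{\alpha_n})$ and identifying it with $\ove\epsi(u^\alpha)$ across domains converging only in the Hausdorff sense — but you only gesture at a ``Mosco-type argument'' and do not carry it out. That step is not routine: Hausdorff convergence alone gives no control over the symmetric difference $\Omega^{\alpha_n}(t)\triangle\Omega^{\alpha}(t)$, and building a recovery sequence of test functions in $H^1_\omega(\Omega^{\alpha_n}(t))$ while keeping the symmetric gradient under control is precisely where the difficulty lies, the John structure notwithstanding. The paper does not go through Mosco convergence at all. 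It uses the uniform Lipschitz estimate \eqref{eq:gradtheta} to get $\theta_n\to\theta$ locally uniformly, hence $\Omega_n(t)\to\Omega(t)$ and $Q_n\to Q$ in the Hausdorff sense uniformly in $t$; it takes a weak-$*$ limit $\tilde\epsi$ of $\ove\epsi(u_n)$ in $L^\infty(0,T;L^2(D;\Rz^{d\times d}))$; it identifies $\tilde\epsi$ with $\epsi(u)$ on compactly contained cylinders $Q_\eta\subset\subset Q$ (which lie in every $Q_n$ for $n$ large) and with $0$ outside $\ove Q$; and it concludes using that $\partial Q$ is Lebesgue-negligible. Whichever route you take, the identification of the limit must actually be proved; as it stands, your argument stops exactly at the point where the work begins.
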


 Theorems \ref{prop:equi}  and \ref{thm:coupled} are proved in Sections
\ref{sec:equi}  and  \ref{sec:coupled}, respectively.  The 
choice for $\alpha$ in \eqref{eq:eq2}, which is inspired by \cite{zurlo} and
assumes that material is added in a locally unstressed state, can be
generalized. One can assume that material is added to the boundary of
the solid at  point $x$ with some given and possibly nonvanishing 
stress $\haz \sigma (x)$, by prescribing 
$\haz \sigma \in
L^2(\Rz^d;\Rz^{d\times d}_{\rm sym})$. To this end, one would just need to
reformulate position \eqref{eq:eq2} as $\alpha(x) = K\ove
\epsi(u)(x,\theta(x)) - \CC^{-1}\haz \sigma(x)$, where $\CC^{-1}$ is
the {\it compliance} tensor. The above results would still hold under this
generalization.

Before moving on, let us explicitly remark that the smallness
assumption on $T$ in Theorem \ref{thm:coupled}  is not due to the
possible onset of singularities in $t\mapsto \Omega(t)$, which are
 here allowed.  The need for restricting to small times resides in the
very nature of the subsystem \eqref{eq:quasi}--\eqref{eq:alpha}, where
the backstress $ \mathbb C  \alpha$ acts as actual forcing. In absence of
mollification, namely, for $K = {\rm id}$, system
\eqref{eq:quasi}--\eqref{eq:alpha} would correspond to the quasistatic
equilibrium system, with the extra forcing  $\nabla \cdot \mathbb C \epsi
(u)(x,\theta(x))$. This extra forcing depends on a space-time trace of
the solution itself, having the same size of $\mathbb C \epsi(u)$. This represents a clear bottleneck to
compactness, hence to existence. On the contrary, the action of the
compactifying operator $K$ allows {\it for small times} to control the
size of $ \mathbb C \alpha$, so that the extra forcing term $\nabla \cdot \mathbb C \alpha$
is dominated by $\mathbb C \epsi(u)$ and 
can be handled as a perturbation. 

\section{The equilibrium problem for a given growth: Proof of
  Theorem \ref{prop:equi}}\label{sec:equi}

As  $\Omega (t) \subset D$ for all $t\in [0,T]$ we have that %   $1/\gamma^*\leq |\nabla \theta|$ almost everywhere, we
% readily get that $\Omega(T) \subset \Omega(0) +
% B_{T\gamma^*}(0)$. In particular, the Lebesgue $d$-measure $
% |\Omega(T)|$ of $\Omega(T)$ can be bounded in terms of data. More
% precisely, we get 
\begin{equation}
  \label{eq:Omega}
  |\Omega(T)| \leq   |D|.%\left(\frac12 {\text{\rm diam} (\Omega(0))}  +\frac{T}{\gamma_*}\right)^d |B_1(0)|=: A.
\end{equation}

In order to find a unique solution $u$ to
\eqref{eq:eq1}-\eqref{eq:eq2}, we start by arguing locally in time,
looking for a solution on $(0,T_0)$ for some
$T_0 \in (0,T]$ small. Indeed, for  $T_0$ small enough  the well-posedness of system \eqref{eq:eq1}-\eqref{eq:eq2} follows by a contraction
argument in the
 function space 
\begin{align*}
U(T_0) :=&\Big\{u : \displaystyle \cup_{t\in (0,T_0)} \Omega (t) \times \{t\}\to
           \Rz^d \ \text{measurable such that}  \\
&\quad \quad \quad  
u(\cdot,t)\in H^1_{ \omega }(\Omega(t);\Rz^d) \ \text{for a.e.} \ t\in (0,T_0) \\
&\quad \quad \quad
\text{and} \ t \mapsto \|u(\cdot,t)\|_{H^1 (\Omega(t);\Rz^d) } \in L^\infty(0,T_0)\Big\}.
\end{align*}
Note that $U(T_0)$ is a Banach space when endowed with the norm 
$$
\|u \|_{U(T_0)}:= 
\esssup_{t\in (0,T_0)}\|u(\cdot,t)\|_{H^1(\Omega(t);\Rz^d)}.
$$ 
We  additionally introduce the
notation $E(T_0):= \{\epsi(u) \, : \, u \in U(T_0)\}$ for the
corresponding  Banach space of symmetric
gradients and let
$$\| \epsi(u) \|_{E(T_0)} :=
\esssup_{t\in (0,T_0)} \| \epsi(u)(\cdot,t)\|_{L^2(\Omega(t);\Rz^{d\times
    d})} = 
\esssup_{t\in (0,T_0)} \| \ove \epsi(u)(\cdot,t)\|_{L^2(D;\Rz^{d\times
    d})}. $$
By combining \cite[Lemma~3.1]{bojarski} and \cite[Theorem~5.1]{bojarski} (see also the remark right before
\cite[Formula~(5.3)]{bojarski}) we obtain existence of a constant $C_P>0$ such
that the
uniform Poincar\'e inequality
\begin{equation}
  \| v \|_{L^2(\Omega;\Rz^d)} \leq C_P \| \nabla v
\|_{L^2(\Omega;\Rz^{d\times d})} \quad \text{for all } \Omega \in \Theta \
\text{and all } v \in H^1_\omega(\Omega;\Rz^d)\label{eq:bojarski}
\end{equation}
holds. 
%Applying \cite[Lemma 3.1 and Theorem 5.1]{bojarski} (with $l=1$, $p=2$ for the case $d>2$, and with $l=1$, $p=1$ for the case $d=2$) together with Hölder's inequality we obtain for 
An application to $u\in  U(T_0)$ gives 
\begin{equation}
\begin{split}
\label{eq:uKorn1}
\|u \|_{U(T_0)}^2
&=  \esssup_{t\in (0,T_0)} \Big(
\|u(\cdot,t)\|_{L^2(\Omega(t);\Rz^d)}^2 + \|\nabla
u(\cdot,t)\|_{L^2(\Omega(t);\Rz^{d\times d})}^2\Big)\\
& \leq  (C_{P}^2+1)  \esssup_{t\in (0,T_0)} \Big( \|\nabla u(\cdot,t)\|^2_{L^2(\Omega(t);\Rz^{d\times d})}\Big).
\end{split}
\end{equation}
The uniform Korn inequality
\eqref{eq:unif-korn} then gives  
\begin{equation}
\begin{split}
\label{eq:uKorn2}
 \|u \|_{U(T_0)}
  \leq  \haz C_{\Theta} \|
\epsi(u)\|_{E(T_0)}
\leq  \haz C_{\Theta} \| u \|_{U(T_0)}
\end{split}
\end{equation}
 for $\haz C_\Theta :=  (C_P^2+1)^{1/2} C_{\Theta}$. 

%Old:
%$$\|u \|_{U(T_0)} \leq
%C_{\Theta} \|
%\epsi(u)\|_{E(T_0)} \leq  C_{\Theta} \| u \|_{U(T_0)} \quad
%\text{ for all } u\in  U(T_0). $$

Fix now $\tilde u\in U(T_0)$.  As $\epsi(\tilde u ) \in
E(T_0)$  we have that $\ove \epsi(\tilde u) \in L^2( \Rz^d ;\Rz^{d\times
d})$ and we 
can define 
\begin{equation}\label{alpha2}
  \alpha(x):=K\ove \epsi(\tilde
  u)(x,\theta(x)) \quad \text{for all }  x \in \Rz^d.
\end{equation}
We readily check that
$\alpha\in L^2(\Omega(T_0);\Rz^{d\times d})$. Indeed, we have by
assumption \eqref{a:K} together with Young's inequality on
convolutions that  
\begin{align}
\|\alpha\|_{L^2(\Omega(T_0);\Rz^{d\times d})} &\leq |\Omega(T_0)|^{1/2} \| K\ove
  \epsi(\tilde u)\|_{L^\infty( D \times (0,T_0); \Rz^{d\times d})}\nonumber\\
& \hspace{-0.13cm} \stackrel{\eqref{eq:Omega}}{\leq}  |D|^{1/2} \EEE \| k
\|_{L^{ 1}(0,T_0)}\| \phi\|_{L^{ 2}(\Rz^d)} \|  \epsi(\tilde u)\|_{E(T_0)}.\label{alphabound}
\end{align}
Correspondingly,  owing again to the uniform Korn inequality
\eqref{eq:unif-korn}, for all fixed $t \in (0,T_0)$ one finds  by the Lax-Milgram Lemma  a unique
$u(\cdot,t)\in H^1_\omega (\Omega(t);\Rz^d)$ solving the variational
equation \eqref{eq:eq1} with $\alpha$ given by
\eqref{alpha2}. By choosing
$v=u(\cdot,t)$ in equation \eqref{eq:eq1},  and using assumptions \eqref{a:TC} and \eqref{a:f}, we easily 
check that 
\begin{align*}
c_* \| \epsi(u)\|^2_{E(T_0)} &\leq \|\CC\| \,\|
  \alpha\|_{L^2(\Omega(T_0);\Rz^{d\times d})}\|\epsi(u)\|_{E(T_0)} + \|
  f\|_{ C^0([0,T]; \EEE L^2(\Rz^d;\Rz^d))} \| u \|_{U(T_0)}\\[0.1cm]
&\leq \Big( \|\CC\| \,\|
  \alpha\|_{L^2(\Omega(T_0);\Rz^{d\times d})} + 
                                                                       \haz
                                                                       C_{
                                                                       \Theta}
                                                                       \|
  f\|_{ C^0([0,T]; L^2(\Rz^d;\Rz^d))}\Big) \|\epsi(u)\|_{E(T_0)},
\end{align*}
where we denoted by $\| \CC \|$ the Frobenius norm of
the elasticity tensor $\CC$ and where we again used  the uniform Korn inequality
\eqref{eq:unif-korn} and the uniform Poincar\'e inequality
\eqref{eq:bojarski}. This in particular ensures that $\epsi(u)\in {E(T_0)}$,
hence $u\in U(T_0)$, again by the inequality
\eqref{eq:uKorn2}.  % the uniform Korn inequality and \cite{bojarski}. \EEE

We now show that the mapping $S(T_0): U(T_0) \to U(T_0)$ given by
$S(T_0)(\tilde u) = u$ is a contraction for $T_0$  small. To this aim, fix
$\tilde u_1,\, \tilde u_2 \in U(T_0)$, let $\alpha_1(x) = K\ove
\epsi (\tilde u_1)(x,\theta(x))$ and  $\alpha_2(x) = K\ove
\epsi (\tilde u_2)(x,\theta(x))$ for $x\in \Omega(T_0)$, and define $u_1=S(T_0)(\tilde u_1)$, and
$u_2=S(T_0)(\tilde u_2)$. Testing equations \eqref{eq:eq1} written for $u_1$
and $u_2$ by $v=u_1 - u_2$ and taking their difference we deduce   by
assumption \eqref{a:TC} and by following the same arguments as in
estimate \eqref{alphabound}   that 
\begin{align}
  c_* \| \epsi (u_1 - u_2)\|_{E(T_0)} &\leq \| \CC\|\,\| \alpha_1 - \alpha_2
    \|_{L^2(\Omega(T_0);\Rz^{d\times d})}\nonumber\\[1mm]
  & %\hspace{-0.13cm} \stackrel{\eqref{alphabound}}{\leq}  
  \leq \| \CC\|\,   |D|^{1/2} \| K\ove
    \epsi(\tilde u _1 -\tilde u_2)  
    \|_{L^\infty( D \times (0,T_0)\EEE ;\Rz^{d\times d})}  \nonumber\\[1mm]
  &
 \leq \| \CC\| \,  |D|^{1/2} \EEE \| k
\|_{L^{ 1}(0,T_0)}\| \phi\|_{L^{ 2} (\Rz^d)} \| \epsi(\tilde u_1 - \tilde
  u_2)\|_{E(T_0)}.\label{contr}
\end{align}
By using again inequality
\eqref{eq:uKorn2} we then conclude that 
\begin{align*}
  \| u_1 -u_2 \|_{U(T_0)} \leq \frac{\haz C_{ \Theta} \|\CC\|}{c_*} |D|^{1/2} \| k
\|_{L^{ 1}(0,T_0)}\| \phi\|_{L^{ 2} (\Rz^d)} \|  \tilde u_1 - \tilde
  u_2\|_{U(T_0)}.
\end{align*}

Let now $T_0=T/n$, $n\in \Nz$, be so small that  
\begin{equation}
	\frac{ \haz C_{ \Theta} \|\CC\|}{c_*}  |D|^{1/2} \| k
\|_{L^{ 1 }((j-1)T_0,jT_0)}\| \phi\|_{L^{ 2 }(\Rz^d)} < 1 \quad \text{for all } j=1,\dots,n.
\label{condT0}
\end{equation}
Note that such $n$ exists as $k$ is  integrable, see \eqref{a:K}.
Under condition
\eqref{condT0}, 
the mapping $S(T_0)$ is a contraction, hence admitting a unique fixed
point. This proves Theorem \ref{prop:equi} for small times.

We next show that one can obtain a  global solution on $(0,T)$ by
successively solving on $(0,T_0)$, $(0,2T_0)$, \dots, $(0,jT_0)$, for
$j = 1,\dots,n$. Assume to have uniquely solved system  
\eqref{eq:eq1}-\eqref{eq:eq2} on
$(0,(j{-}1)T_0)$. Indicate by $u_j \in U((j{-}1)T_0)$ the corresponding
solution and fix
$$\tilde u \in V:=\{ v \in U(jT_0)\,: \, v = u_j \
\text{on} \ (0,(j{-}1)T_0)\}$$
 which is a closed subspace of $U(jT_0)$. \EEE

By defining $\alpha$ as in \eqref{alpha2}
with $\Omega(T_0)$ replaced by $\Omega(jT_0)$  we can
  reproduce bound \eqref{alphabound}  with $jT_0$ instead of $T_0$
  \EEE so that  $\alpha \in
  L^2(\Omega(jT_0);\Rz^{d \times d})$. One hence finds a unique
  solution $u = S(jT_0)(\tilde u)$  of the equilibrium system \eqref{eq:eq1} with $\alpha$ given by
  \eqref{alpha2}, for almost all $t \in (0,jT_0)$.
  Recall that we have $u
= u_j$ on $(0,(j{-}1)T_0)$, since $u$ is the unique solution to \eqref{eq:eq1} with $\alpha(x)= K\ove \epsi( 
    u_j)(x,\theta(x))$ for $x \in \Omega((j{-}1)T_0)$. In particular,
    $u \in V$. 

We conclude by checking that $S(jT_0) : V \to V$ defined by $\tilde u \mapsto u$ is a
contraction. Fix
$\tilde u_1,\, \tilde u_2 \in V$, let the corresponding
$\alpha_1$ and $\alpha_2$ be given by \eqref{alpha2}, and define $u_1=S(jT_0)(\tilde u_1)$, and
$u_2=S(jT_0)(\tilde u_2)$. We adapt the argument  of estimate
\eqref{contr}, taking into account that $\tilde u_1 = \tilde u_2$ on
$(0,(j{-}1)T_0)$. We  get
\begin{align}
  &c_* \| \epsi (u_1 - u_2)\|_{E(jT_0)}   \leq \| \CC\|\,\| \alpha_1 - \alpha_2
    \|_{L^2( D \EEE ;\Rz^{d\times d})} \nonumber\\[1.5mm]
  &\quad \leq \| \CC\|\,  |D|^{1/2} \EEE \|
    K\ove \epsi(\tilde u_1 - \tilde u_2)\|_{L^\infty( D \times
    \EEE (0,jT_0)  ;\Rz^{d\times d})}\nonumber\\[1.5mm]
  &\quad = \| \CC\|\,  |D|^{1/2} \EEE  \sup_{(x,t)\in  D \EEE \times (0,jT_0)} \left|
    \int_0^t \!\int_{\Rz^d}
    k(t -s) \phi(x-y)\ove\epsi (\tilde u_1(y ,s) - \tilde u_2(y, s))
    \, \d y\,\d s \right|\nonumber\\
 & \quad \leq \| \CC\|\,  |D|^{1/2} \EEE  \sup_{(x,t)\in  D \EEE \times (0,jT_0)} 
    \int_{(j{-}1)T_0}^{t} \!\int_{\Rz^d}
    |k(t -s)|\,| \phi(x-y)| \, |\ove\epsi (\tilde u_1(y ,s) - \tilde u_2(y, s))|
   \, \d y\,\d s \nonumber\\[0.5mm]
  &\quad  \leq \| \CC\|\,  |D|^{1/2} \EEE  \| k \|_{L^{ 1 }((j{-}1)T_0,jT_0)}\|
    \phi\|_{L^{ 2 }(\Rz^d)} \| \epsi (\tilde u_1- \tilde u_2)\|_{E(jT_0)}.\nonumber
\end{align}
Using again  inequality \eqref{eq:uKorn2} we infer that 
$$ 
  \| u_1 -u_2 \|_{U(jT_0)} \leq \frac{ \haz C_{ \Theta} \|\CC\|}{c_*}\,
   |D|^{1/2}  \| k \|_{L^{ 1 }((j{-}1)T_0,jT_0)}\|
  \phi\|_{L^{ 2 }(\Rz^d)} \|  \tilde u_1 - \tilde
  u_2\|_{U(jT_0)}  
  $$
  so that the smallness assumption \eqref{condT0} entails that
  $S(jT_0): V \to V$ is a contraction. This proves the existence of a unique
  solution of problem \eqref{eq:eq1}-\eqref{eq:eq2} almost everywhere on
  $(0,jT_0)$. The assertion follows by letting $j=n$.

   Note that, in order to prove Theorem \ref{prop:equi} one does
  not need to assume the differentiability of the kernels $k$ and
  $\phi $ as in \eqref{a:K} but the weaker requirements  $k \in L^1(0,T)$ and $\phi\in
  L^2(\Rz^d)$ are indeed sufficient. %enough to conclude. \EEE

\section{The coupled equilibrium-growth problem: Proof of Theorem
  \ref{thm:coupled}}\label{sec:coupled}

 The existence of a solution to the equilibrium-growth coupled
system \eqref{eq:eq1}-\eqref{eq:theta2} for $T>0$ small follows by a
fixed-point argument on the function $\alpha$.
Define
$$A:=\{\alpha \in C^0(D;\Rz^{d\times d}) \,  : \, \|
\alpha\|_{W^{1,\infty}(D;\Rz^{d\times d})}\leq L\} $$
where $L>0$ depends just on the data and is specified in \eqref{eq:L} below.

Given $\tilde \alpha \in A$ one has  by \eqref{a:gamma}  that $x\mapsto
\gamma(x,\tilde \alpha(x))$ is Lipschitz continuous and,  as discussed in Subsection \ref{sec:assumptions}, there exists a
unique $\theta $ solving
\eqref{eq:theta1}-\eqref{eq:theta2} with $\gamma(x,\alpha(x))$
replaced by $\gamma(x,\tilde \alpha(x))$. With such $\theta$ one defines $t\mapsto \Omega(t)
=\{x\in \Rz^d \, : \,  \theta(x) <t \}\in \Theta$. As   $t
\mapsto \Omega(t)$ is increasing by set inclusion and $\cup_{t \in
  (0,T)} \Omega(t) \times \{t\} = \{ (x,t) \in \Rz^d \times (0,T)\, :
\, \theta(x)<t\}$ is measurable by Lemma \ref{lemma:set-meas}, one uses Theorem
\ref{prop:equi} in order to find the unique solution $u$ of
\eqref{eq:eq1}-\eqref{eq:eq2} for the given $t\in [0,T]\mapsto \Omega(t)$. This in particular defines the mapping
$$S : \tilde \alpha \in A \subset  C^0(D;\Rz^{d\times d})  \mapsto
\alpha=K\ove \epsi (u) (\cdot, \theta(\cdot)) \in  C^0(D;\Rz^{d\times d}).$$
The assertion of Theorem \ref{thm:coupled} follows as soon as we prove 
that   $S$ admits a fixed point.

To start with, let us provide an a-priori estimate on $u(\cdot,t)$. By
choosing $v =u(\cdot,t)$ in \eqref{eq:eq1} and using inequality
\eqref{eq:uKorn2} 
we get
\begin{align}
   c_*\|  \epsi(u)(\cdot,t)\|_{L^2(\Omega(t);\Rz^{d\times d})}\leq \| \CC \| \,\|
  \alpha\|_{L^2(D;\Rz^{d\times d})} + \haz C_\Theta \| f(\cdot, t) \|_{L^2( \Rz^d;\Rz^{d})}. \label{4:1}
\end{align}
On the other hand,  using \eqref{a:K} and applying Young's inequality
on convolutions we can control $\alpha$ as
\begin{equation}
  \label{4:2}
  \| \alpha\|_{L^\infty(D;\Rz^{d\times d})} \leq \| k \|_{L^1(0,T) } \|
    \phi\|_{L^2(\Rz^d)} \|  \epsi (u) \|_{E(T)}.
  \end{equation}
  We now assume that $T>0$ is so small that
  \begin{equation} \frac{\| \CC \| \,|D|^{1/2}}{c_*}\| k \|_{L^1(0,T) } \|
  \phi\|_{L^2(\Rz^d)}  =: \eta <1\label{eta}
  \end{equation}
  and combine \eqref{4:1}-\eqref{4:2} in order to get that
  $$ c_*(1-\eta)  \| \epsi (u) \|_{E(T)}  \leq  \haz
  C_\Theta \|
  f  \|_{C^0([0,T];L^2( \Rz^d;\Rz^{d}))}.$$
    This in particular entails that
    \begin{equation}
      \label{4:3}
 \| \epsi (u) \|_{E(T)}  \leq
\frac{\haz C_\Theta }{ c_*(1-\eta)}\| f \|_{C^0([0,T];L^2(\Rz^d;\Rz^d))}=:M
\end{equation}
where $M$ depends on data only.  
Note that the smallness assumption
\eqref{eta} does not require the smallness of applied forces.

We now compute the gradient  
$$ \nabla \alpha(x) = (\nabla K \ove \epsi (u)) (x, \theta(x)) +
(\partial_t  K \ove \epsi (u)) (x, \theta(x)) \, \nabla \theta(x)
\qquad \text{for all } x \in D.$$
Using \eqref{4:2}-\eqref{4:3}, the regularity of the kernels $k$ and $\phi$
from \eqref{a:K},  and Young’s inequality on convolutions we hence have that  
\begin{align}
  \| \alpha\|_{L^\infty(D;\Rz^{d\times d})}  &\leq \| k \|_{L^1(0,T) } \|
  \phi\|_{L^2(\Rz^d)} M, \label{4:4}\\[0.1cm]
  \| \nabla \alpha\|_{L^\infty(D;\Rz^{d\times d\times d})}   &\leq \| k \|_{L^1(0,T) } \|
   \nabla \phi\|_{L^2(\Rz^d)} M \nonumber\\
  & \quad  + \Big(\| k'\|_{L^1(0,T) } + |k(0)| \Big)\|
  \phi\|_{L^2(\Rz^d)} \| \nabla \theta \|_{L^\infty(D;\Rz^d)}M. \label{4:5}
\end{align}
Thus, recalling \eqref{eq:gradtheta} and  letting 
\begin{equation}
  \label{eq:L}
 L:=   \|k
    \|_{L^1(0,T)} \| \phi\|_{H^1(\Rz^d)}M + \Big(\|k'
    \|_{L^1(0,T)}+ |k(0)|\Big) \|\phi\|_{L^2(\Rz^d)} \frac{1}{\gamma_*}
   M 
 \end{equation}
 one has that $\alpha = S(\tilde \alpha)$ belongs to $A$, as well. Note
that $L$ is bounded in terms of data only.

We now check the continuity of $S$ with respect to the strong topology 
of  $C^0(D;\Rz^{d\times d})$. Let $\tilde \alpha_n,
\tilde \alpha \in C^0(D;\Rz^{d\times d})$ be given with $\tilde \alpha_n
\to \tilde \alpha $ uniformly. As $\gamma $ is Lipschitz continuous, see 
\eqref{a:gamma}, we have that $\gamma(\cdot, \tilde \alpha_n(\cdot))
\to \gamma(\cdot, \tilde \alpha(\cdot))$ uniformly, as well. This
suffices to pass to the limit in the eikonal problem
\eqref{eq:theta1}-\eqref{eq:theta2} written for $\gamma(\cdot, \tilde
\alpha_n(\cdot))$ and to find that the corresponding solutions
$\theta_n$ converge uniformly to the solution $\theta$ of
\eqref{eq:theta1}-\eqref{eq:theta2}   for $\gamma(\cdot, \tilde
\alpha(\cdot))$. Indeed, the functions
$\theta_n$ are uniformly Lipschitz continuous with $\theta_n=0$ on
$\overline{\Omega(0)}$. They hence admit a not relabeled, locally
uniformly converging subsequence $\theta_n \to \theta$  with $\theta$
Lipschitz continuous and $\theta=0$ on
$\overline{\Omega(0)}$. Let $x_0\in \Rz^d \setminus
\overline{\Omega(0)}$ be given and $\varphi$ be smooth with
$\varphi(x_0)=-\theta(x_0)$ and $\varphi \geq - \theta$ in a
neighborhood of $x_0$. By using the classical approximation procedure
of \cite[Proposition~2.4]{userguide}, we find $x_n \in \Rz^d \setminus
\overline{\Omega(0)}$ with $x_n \to x_0$ and
$\varphi_n$ smooth such that 
$\varphi_n(x_n)=-\theta_n(x_n)$ and $\varphi_n \geq - \theta_n$ in a
neighborhood of $x_n$, and $\nabla \varphi_n(x_n) \to \nabla
\varphi(x_0)$. As $\theta_n$ are viscosity subsolutions, we have that
$ \gamma(x_n,\alpha_n(x_n)) |\nabla
\varphi_n(x_n)| \leq 1$. By passing to the limit as $n\to \infty$ we
obtain that  $\gamma (x_0,\alpha(x_0))|\nabla \varphi(x_0)|\leq 1$,
so that $\theta$ is a viscosity subsolution, as well. In a similar way, we
can check that $\theta$ is a viscosity supersolution, hence a viscosity
solution. Given uniqueness, no extraction of subsequences was actually
needed at this point.

Given such $\theta_n $ and $\theta$ we can define the
corresponding $t\mapsto \Omega_n(t)$ and $t\mapsto \Omega (t)$ (both
increasing by set inclusion and such that the corresponding $Q_n =
\cup_{t\in [0,T]}\Omega_n(t)\times\{t\}$  and
$Q =
\cup_{t\in [0,T]}\Omega(t)\times\{t\}$ are measurable). As $\theta_n$ converges to $\theta$ locally uniformly and the
inequalities \eqref{eq:gradtheta} hold, independently of $n$, we have
that $\Omega_n(t)\to \Omega(t)$ in the Hausdorff sense, uniformly with
respect to $t \in [0,T]$. Moreover, $Q_n\to Q$  in the Hausdorff sense, as
well.

Let us 
indicate by $u_n$ and $u$ the unique solutions of \eqref{eq:eq1}-\eqref{eq:eq2} given by Theorem
\ref{prop:equi}. From the very definition of $S$ let us recall that $\alpha_n
=S(\tilde \alpha_n) = K\ove\epsi(u_n)(\cdot,\theta_n(\cdot))$ and $\alpha
=S( \tilde \alpha ) = K\ove\epsi(u)(\cdot,\theta(\cdot))$.
Bounds \eqref{4:3}-\eqref{4:5}  and a localization argument entail that
\begin{align}
  \ove \epsi(u_n) \rightharpoonup^\ast \ove \epsi(u) \quad&\text{weakly* in} \ \  L^\infty(0,T;L^2(D; \Rz^{d\times  d})).\label{4:6} 
\end{align}
In fact, one has that $\ove \epsi(u_n)$ are uniformly bounded in
$L^\infty(0,T;L^2(D; \Rz^{d\times  d}))$, hence admitting a weak$*$
limit along some not relabeled subsequence. Denote by $\tilde \epsi$
such limit. Fix now $(x,t)\in Q$, as well as $\eta>0$ small enough, so
that $Q_\eta = B_\eta(x) \times (t-\eta,t+\eta) \subset \subset Q$. From the convergence $Q_n \to Q$ in the Hausdorff sense we have
that $ Q_\eta\subset Q_n$ for all $n$ large enough. Hence, by
indicating by $1_{ Q_\eta}$ the indicator function of $Q_\eta$ one has
that
$\ove \epsi(u_n) 1_{Q\eta} \to \tilde \epsi 1_{Q\eta}$ weakly$*$ in
$L^\infty(t-\eta,t+\eta,L^2(B_\eta(x); \Rz^{d\times d}))$. At the same
time $\ove \epsi(u_n) 1_{Q\eta}  = \epsi (u_n) 1_{Q\eta} \to  \epsi(u) 1_{Q\eta}$ weakly$*$ in
$L^\infty(t-\eta,t+\eta,L^2(B_\eta(x); \Rz^{d\times d}))$. As $(x,t)
\in Q$ is arbitrary, this shows that $\tilde \epsi = \epsi(u) \equiv
\ove \epsi(u)$ on
$Q$. An analogous argument applied to $(x,t) \not \in \ove Q$ proves
that $\tilde \epsi=0=\ove \epsi(u)$ in $\Rz^d \times (0,T) \setminus
\ove Q$.
In order to conclude for \eqref{4:6} it hence suffices to recall
that $\partial Q$ is negligible. 

Note that the whole sequence $\ove \epsi(u_n)$ converges, due to the uniqueness of the
limit. Owing to the compactifying character of the nonlocal operator $K$ we
also have that
\begin{align}
  K\ove \epsi(u_n) \to K \ove \epsi(u)\quad &\text{strongly in} \ \
                                              C^0(D\times (0,T); \Rz^{d\times d}).\label{4:8}
\end{align}
In addition, $K\ove \epsi(u_n)$ are uniformly Lipschitz
continuous in time: By following the argument leading to bounds
\eqref{4:4}-\eqref{4:5}, we can check that 
\begin{align}
 \| K\ove \epsi(u_n)(\cdot,t_1) -K\ove \epsi(u_n)(\cdot  ,t_2)
  \|_{L^\infty(D;\Rz^{d\times d})}  & \leq \|
  \partial_t   K\ove \epsi(u_n) \|_{L^\infty(D\times (t_1,t_2);\Rz^{d\times
  d})} |t_1 - t_2| 
  \nonumber \\ 
  &  \leq \big( \| k'\|_{L^1(t_1,t_2)} + |k(0)| \big)
    \|\phi\|_{L^2(\Rz^d)}   M\, |t_1 - t_2| \label{4:9}
\end{align}
for all $0<t_1 <t_2<T$. 
We can hence conclude that
\begin{align}
  &\| \alpha_n - \alpha \|_{L^\infty(D;\Rz^{d\times d})}  \nonumber\\[2mm]
  & \quad = \| K\ove \epsi(u_n) (\cdot,
  \theta_n(\cdot)) - K\ove \epsi(u) (\cdot,
  \theta(\cdot))\|_{L^\infty(D;\Rz^{d\times
    d})}    \nonumber\\[2mm]
  & \quad \leq \| K\ove \epsi(u_n) (\cdot,
  \theta_n(\cdot)) - K\ove \epsi(u_n) (\cdot,
\theta(\cdot))\|_{L^\infty(D;\Rz^{d\times
    d})}  + \| K\ove \epsi(u_n) (\cdot,
  \theta(\cdot)) - K\ove \epsi(u) (\cdot,
    \theta(\cdot))\|_{L^\infty(D;\Rz^{d\times d})} \nonumber\\
  &\; \stackrel{\eqref{4:9}}{\leq}\big( \| k'\|_{L^1(0,T)} + |k(0)|
  \big) \|\phi\|_{L^2(\Rz^d)} \EEE M \| \theta_n - \theta\|_{L^\infty(D)} + \| K\ove \epsi(u_n) - K\ove \epsi(u) \|_{L^\infty(D \times
    (0,T);\Rz^{d\times d})} \to 0,\nonumber
\end{align}
where we have used that $\theta_n \to \theta $ and
$K\ove \epsi(u_n) \to K\ove \epsi(u)$ uniformly. This proves that   $S(\tilde \alpha_n) \to S(\tilde \alpha)$ strongly in
$C^0(D;\Rz^{d\times d})$, namely, that  $S$ is continuous.

Eventually, as $A$ is   convex and compact in
$C^0(D;\Rz^{d\times d})$ we can apply the Schauder Fixed-Point Theorem
and complete the proof of Theorem \ref{thm:coupled}.

\section{Uniform Korn inequality}
 \label{sec:korn}
 We conclude this paper with a proof of the uniform Korn inequality in
 Proposition \ref{UKI}. 

Let us argue by contradiction and assume that the statement  of Proposition
\ref{UKI} is false.  In particular, for every $k\in
\mathbb{N}$ we assume to be given an open set $\Omega_k\in \Theta$ and a map $u_k\in H^1_\omega( \Omega_k \EEE;\Rz^d)$ such that
$$\| \nabla u_k \|_{L^2(\Omega_k;\Rz^{d\times d})} > k\|
 \epsi(u_k)\|_{L^2(\Omega_k;\Rz^{d\times d})}.$$
 By normalizing, with no loss of generality we can assume that
 \begin{equation}
 \label{eq:normalized}\| \nabla u_k \|_{L^2(\Omega_k;\Rz^{d\times d})}=1\quad\text{and}\quad\|
 \epsi(u_k)\|_{L^2(\Omega_k;\Rz^{d\times d})}\leq\frac1k
 \end{equation}
 for every $k\in \mathbb{N}$. As all $\Omega_k \subset\subset
 D$ and $D$  is bounded, one can find a not relabeled subsequence such
 that $\ove{\Omega}_k\to K $ in the sense of the
 Hausdorff convergence, where $K\subset \subset D$. 
 %Since the Hausdorff convergence induces a complete metric on the setof closed nonempty bounded sets contained in a fixed domain $D$, it follows in particular that the limit of the sequence of sets$\{\Omega_k\}_{k\in\Nz}$ is still a compact set. 
Define now $\Omega_\infty:={\rm int} (K)$ (interior part), so that $\ove{\Omega}_\infty=K$. % is then defined as the interior part of such Hausdorff limit. 
 From the connectedness of each set $\Omega_k$ we also infer that $\ove{\Omega}_\infty$ is connected. 
 %\todo[inline]{
 %E: the idea of the proof of this should be the following. By contradiction, if a compact set is not connected, it has at least two connected components $A$ and $B$ at positive distance, say $d>0$ from each other. Being the Hausdorff limit of a sequence of sets, for $\epsi$ small enough the sets must completely lie within an $\epsi$-neighborhood of $A\cup B$ and there must be both points close to $A$ and close to $B$ so that for $\epsi$ small the sets should become disconnected, leading to a contradiction. It should be something classical/basic. We should think whether including it or not.
% }
 
 Define now  $S_k\coloneqq \cap_{n\geq k} \ove{\Omega}_n$, 
 so that the sequence $\{S_k\}_{k\in \Nz}$ 
 is increasing by set inclusion. In particular, there also holds
 $S_k\cap \ove{\Omega}_\infty\to \ove{\Omega}_\infty$ in the Hausdorff
 sense.  We notice that the Korn constant $\haz C_{\rm Korn}$ in the first Korn inequality
 is the same for all sets in $\Theta$, cf. 
 \cite[Theorem~4.2]{acosta}, as such constant depends on $C_J$, $d$,
 $\rho_0$, and ${\rm diam}(\Omega)$ only,  \cite[Theorem~4.1]{acosta}.  %\cite[Theorems 3.8 and 5.17]{diening-et-al} as well as \cite[Lemma 3.1]{bojarski} and the discussion below. 
 Thus, we infer that
 \begin{align}
 \nonumber
 1&=\| \nabla u_k \|_{L^2(\Omega_k;\Rz^{d\times d})}^2
 \nonumber \\
 &\leq  \haz C_{\rm Korn} \left(\|u_k\|_{L^2(\Omega_k;\Rz^d)}^2+\|\epsi(u_k)\|_{L^2(\Omega_k;\Rz^{d\times d})}^2\right) \nonumber\\
\label{eq:contradiction} &\leq 2 \haz C_{\rm Korn}  \left(\|u_k\|_{L^2(S_k\cap \ove{\Omega}_\infty;\Rz^d)}^2+\|u_k\|_{L^2(\Omega_k\setminus(S_k\cap \ove{\Omega}_\infty);\Rz^d)}^2+\|\epsi(u_k)\|_{L^2(\Omega_k;\Rz^{d\times d})}^2\right).
 \end{align} 
 In view of \eqref{eq:normalized}, we already know that the third term
 on the right-hand side of \eqref{eq:contradiction} converges to 
 $0$ 
 as $k\to \infty$. In order to reach a contradiction, we proceed by
 showing that also the first and second contributions on the
 right-hand side of \eqref{eq:contradiction}  are infinitesimal as
 $k\to \infty$.

 We subdivide the remaining part of the proof into
  four steps. 
 
  \noindent\textbf{Step 1.} We first show that the set $\Omega_\infty$
  is still a John domain with respect to $x_0$, possibly with a
  smaller John constant. 

  Let $x\in \Omega_\infty$ be fixed.  First, recall that
  $\omega$ is connected and since ${\rm dist}\,(\omega;\partial
  \Omega_k)\geq \rho_0>0$ for every $k\in \Nz$, there holds
  $\omega\subset\subset\Omega_\infty$. Therefore, if  $x\in\omega$,
  the existence of an arc-length parametrized curve $ \rho $
  joining $x$ and $x_0$ and with positive distance from
  $\partial\Omega_\infty$ is directly ensured.
  
  Consider now the case in which $x\notin \omega$. For $k$ big enough
  $x\in \Omega_k$, and there exists an arc-length parametrized curve
  $ \rho_k  :[0,L_k]\to \Omega_k$ such that $ \rho_k 
  (0)=x$, $ \rho_k  (L_k)=x_0$ and ${\rm dist}\,(\rho_k(s);\partial \Omega_k)\geq  C_J  s$  for all $s\in [0,
   L_k ]$. The fact that $x\notin \omega$ implies that  $
  L_k \geq {\rm dist}\,(x_0;\partial\omega)$ for all $k\in
  \Nz$. Without introducing new notation, we extend each curve $
  \rho_k $ continuously to the whole interval $[0,\infty)$, by setting $
  \rho_k  (s)=x_0$ for every $s>L_k$.
  
One has that $\sup_{k\in \Nz}\|
  \rho_k \|_{L^\infty(0,\infty)} < \infty$ due to the
  fact that $\Omega_k\subset D$ for every $k\in \Nz$ and to the
  boundedness of $D$. Moreover,  $\sup_{k\in \Nz}\|\dot{
  \rho_k }\|_{L^\infty(0,\infty)} \leq 1$ because all curves $
  \rho_k $ are parametrized by arc-length on $[0,L_k]$ and are
  then constant. As a result, $ \{
  \rho_k \}_{k\in \Nz} \subset W^{1,\infty}(0,\infty)$ is
  uniformly bounded and there exists $L\in (0,\infty]$ and a curve
  $ \rho :[0,\infty)\to D$ with $ \rho  (0)=x$, $
  \rho  (L)=x_0$, such that, up to subsequences $L_k\to L$ and
  $\rho_k \to  \rho $ strongly in
  $L^{\infty}(0,\infty)$ and weakly* in $W^{1,\infty}(0,\infty)$. 

  We proceed by showing that $ \rho(s) \subset \Omega_\infty$
  for all $s\geq 0$  and that ${\rm dist}\,( \rho(s);\partial\Omega_\infty)\geq  { C_J s}/{8}$ for all $s\in
  [0,L]$. Indeed, fix $\ove{s}>0$ and let $\ove{k}\in \Nz$ big enough
  so that 
  $$
  \| \rho_k -\rho\|_{L^{\infty}(0,\infty)}\leq \frac{C_J\ove{s}}{8}\quad\text{for
    every} \ \ k\geq \ove{k}.
    $$
  Then,   
  $$
  B_{\frac{ C_J \ove{s}}{8}}( \rho  (\ove{s}))\subset B_{\frac{ C_J \ove{s}}{8}}( \rho_k  (\ove{s}))+B_{\frac{ C_J \ove{s}}{8}}(0)\subset\subset B_{\frac{ C_J \ove{s}}{4}}( \rho_k  (\ove{s}))\subset\subset \Omega_k
  $$
  for every $k\geq \ove{k}$. In particular, it follows that
  $B_{ C_J \ove{s}/{8}}( \rho
  (\ove{s}))\subset\Omega_\infty$. Since the same argument holds for
  every $\ove{s}>0$, we deduce that $ \rho (0,\infty) \subset
  \Omega_\infty$. From the openness of $\Omega_\infty$ we then infer
  that also $x= \rho  (0)\in \Omega_\infty$, as well, and that $\Omega_\infty$ is a John domain with respect to
  $x_0$, with John constant at least ${C_J}/{8}$.  
  
 \noindent\textbf{Step 2.}  In this step, we prove that the first
 term in the right-hand side of \eqref{eq:contradiction} is
 infinitesimal as $k\to \infty$. 
 
  Since $S_k\subset \Omega_k$ for all $k\in \mathbb{N}$, by \eqref{eq:normalized} it follows that
 $$\|
 \epsi(u_k)\|_{L^2(S_k\cap \Omega_\infty;\Rz^{d\times d})}\leq\frac1k$$
 for all $k\in \mathbb{N}$. Let $\eta>0$. In view of Step 1, by \cite[Theorems 4.5 and 4.6]{vaisala}, for every $\eta>0$ there exists a John domain $\omega_\eta\subset \Rz^d$ such that $\omega\subset\subset \omega_\eta\subset\subset \Omega_\infty$, and 
  \begin{equation}
  \label{eq:meas-omega-eta}
  |\Omega_\infty\setminus\omega_\eta|<\eta.
  \end{equation} 
Then, $\omega_\eta\subset\subset \Omega_k\cap\Omega_\infty$ for $k$ big enough, and hence $\omega_\eta\subset\subset S_k$ for $k$ big enough. Additionally, $\{u_k\}_{k\in \Nz}\subset H^1_{\omega}(\omega_\eta;\Rz^d)$ for $k$ big enough 
and 
 \begin{equation}
 \label{eq:bd-euk}
 \|\epsi(u_k)\|_{L^2(\omega_\eta;\Rz^{d\times d})}\leq\frac1k.
 \end{equation}
 Since $\omega_\eta$ is a John domain, by \eqref{eq:Korn} and in view of \cite[Theorem~5.1]{bojarski} we infer the existence of a map 
 $u\in H^1_{\omega}(\omega_\eta;\Rz^d)$ such that, up to extracting a further non-relabeled subsequence, there holds
  $$
  u_k\rightharpoonup u\quad\text{weakly in }H^1_{\omega}(\omega_\eta;\Rz^d).
  $$
 On the other hand, by \eqref{eq:bd-euk} we find that $\epsi(u)=0$ on $\omega_\eta$. Since $u=0$ on $\omega$, this implies that $u\equiv 0$ on $\omega_\eta$. Hence, in particular,
 \begin{equation}
 \label{eq:need1}
 u_k\to 0\quad\text{strongly in }L^2(\omega_\eta;\Rz^d).
\end{equation}

As $\Omega_\infty$ is a John domain, the boundary $\partial \Omega_\infty$ has zero measure,
cf. \cite[Corollary~2.3]{koskela-rohde}. Hence, we can write   
 \begin{equation}
 \label{eq:need2}
 \|u_k\|^2_{L^2(S_k\cap \ove \Omega_\infty;\Rz^d)}=  \|u_k\|^2_{L^2(S_k\cap \Omega_\infty;\Rz^d)}=\|u_k\|^2_{L^2(\omega_\eta;\Rz^d)}+\|u_k\|^2_{L^2((S_k\cap \Omega_\infty)\setminus \omega_\eta;\Rz^d)},
\end{equation}
and prove that it converges to $0$ as $k\to \infty$. Indeed, the
first term in the above right-hand side is infinitesimal due to
\eqref{eq:need1}. In order to handle the second term, we first apply
the H\"older inequality and then we rely again on
\cite[Theorem~5.1]{bojarski}. Let us momentarily assume that $d>2$ (the case
$d=2$ is discussed afterwards). By letting  $2^*=2d/(d-2)$, one argues as
follows
\begin{align}
  \|u_k\|^2_{L^2((S_k\cap\Omega_\infty)\setminus\omega_\eta;\Rz^d)}&\leq |(S_k\cap
                                              \Omega_\infty)\setminus
                                              \omega_\eta|^{\frac{2^*-2}{2^*}}
                                              \|u_k\|^2_{L^{2^*}((S_k\cap
                                                                     \Omega_\infty)
                                                                     \setminus\omega_\eta;\Rz^d)}\nonumber\\
  & \leq  |\Omega_\infty\setminus \omega_\eta| ^{\frac{2^*-2}{2^*}}\|u_k\|^2_{L^{2^*}((S_k\cap
                                                                     \Omega_\infty)
                                                                     \setminus\omega_\eta;\Rz^d)}\nonumber\\
  &  \leq C |\Omega_\infty\setminus \omega_\eta| ^{\frac{2^*-2}{2^*}} \| \nabla u_k
    \|^2_{L^2(\Omega_k;\Rz^{d\times d})} \leq
    C\eta^{\frac{2^*-2}{2^*}}, \label{eq:hol}
\end{align} 
where $C$ is independent of $k$. 
 From the arbitrariness of $\eta$, the decomposition
 \eqref{eq:need2} and convergences \eqref{eq:need1} and
 \eqref{eq:hol} entail that 
 \begin{equation}
 \label{eq:Ito0}
 \limsup_{k\to \infty} \|u_k\|^2_{L^2(S_k\cap \Omega_\infty;\Rz^d)}=0.
\end{equation}
We reach the same conclusion in case $d=2$. By applying the first
H\"older step in \eqref{eq:hol} with respect to an arbitrary
exponent $p/2>1$ and then argue via \cite[Theorem~5.1]{bojarski} one gets
$$\|u_k\|^2_{L^2((S_k\cap
  \Omega_\infty)\setminus\omega_\eta;\Rz^2)} \leq
C\eta^{\frac{p-2}{p}}$$
so that \eqref{eq:Ito0} again follows. 

% \EEE
%  On the other hand, recalling that the Sobolev-Poincar\'e constant is invariant on $\Theta$ (see \cite[Section 6]{bojarski}) \LLL (Note: rewrite this part using also Theorem 5.1 in \cite{bojarski} for the case $d=2$), 
%  we have
%  \begin{equation}
%  \label{eq:SP}\|u_k\|^2_{L^{2^\ast}(\Omega_k;\Rz^d)}\leq C_{SP}\|\nabla u_k\|_{L^2(\Omega_k;\Rz^{d\times d})}^2.
%  \end{equation}
%  where $2^\ast$.....\EEE
%  Thus, by H\"older inequality we conclude that
%  \begin{equation}
%  \label{eq:need3}
%  \|u_k\|^2_{L^2((S_k\cap \Omega_\infty)\setminus \omega_\eta;\Rz^d)}\leq C_{SP}\|\nabla u_k\|_{L^2(\Omega_k;\Rz^{d\times d})}^\alpha |\Omega_\infty\setminus \omega_\eta|^\beta\leq C|\Omega_\infty\setminus \omega_\eta|^\beta
%  \end{equation} for suitable $\alpha,\beta>0$. Combining \eqref{eq:need1}--\eqref{eq:need3} we infer the bound
% $$ \limsup_{k\to \infty} \|u_k\|^2_{L^2(S_k\cap \Omega_\infty;\Rz^d)}\leq C|\Omega_\infty\setminus \omega_\eta|^\beta\leq C\eta^\beta.$$
%  From the arbitrariness of $\eta$ we deduce
%  \begin{equation}
%  \label{eq:Ito0}
%  \limsup_{k\to \infty} \|u_k\|^2_{L^2(S_k\cap \Omega_\infty;\Rz^d)}=0.
%  \end{equation}
 
  \noindent\textbf{Step 3.} Here we show that the second term in
  the right-hand side of inequality \eqref{eq:contradiction} goes to
  $0$ as $k\to \infty$. More precisely, we show that 
  \begin{equation}
  \label{eq:IIto0}
   \lim_{k\to \infty} \|u_k\|_{L^2(\Omega_k\setminus(S_k\cap \Omega_\infty);\Rz^d)}=0.
   \end{equation}
   Note that $\Omega_k\setminus(S_k\cap \Omega_\infty)\subset U_k$ where
   $$U_k\coloneqq \left(\cup_{n\geq k} \Omega_n\right)\cap (S_k\cap \Omega_\infty)^c.$$
   By definition, $\{U_k\}_{k\in \Nz}$ is the intersection of two
   decreasing sequences of nested sets and it is thus a
   decreasing sequence of nested sets as well. In particular, 
   $U_k\to \cap_{k\in \mathbb{N}} \,U_k\subset \partial \Omega_\infty$
   both in the Hausdorff sense and in measure. As the  Lebesgue measure of the boundary $\partial \Omega_\infty$ is
   $0$, we have that $|U_k| \to 0$ as $k\to \infty$. For $d>2$, by replicating
    the argument leading to \eqref{eq:hol}, now on the sets $\Omega_k\setminus(S_k\cap
     \ove\Omega_\infty)$, 
    one gets 
 $$ 
  \|u_k\|^2_{L^2(\Omega_k\setminus(S_k\cap
     \Omega_\infty);\Rz^d)}\leq C|U_k|^{ \frac{2^*-2}{2}},
 $$
   which implies \eqref{eq:IIto0}. In the case $d=2$, we argue analogously.
   %  Since the
   % Lebesgue measure of \UUU the boundary $\partial \Omega_\infty$ is
   % zero, \EEE
   % cf. \cite[Cor.~2.3]{koskela-rohde},  \EEE Property \eqref{eq:SP}  and H\"older inequality yield
   % $$\|u_k\|^2_{L^2(\Omega_k\setminus(S_k\cap
   %   \Omega_\infty);\Rz^d)}\leq C|U_k|^\beta$$ for a suitable
   %   $\beta>0$, and hence implies \eqref{eq:IIto0}.

   \noindent\textbf{Step 4.} We are now in the position of concluding
   the proof of Proposition \ref{UKI}. Indeed, the convergences  \eqref{eq:normalized},
   \eqref{eq:Ito0}, and \eqref{eq:IIto0} ensure that the right-hand
   side of inequality
   \eqref{eq:contradiction} converges to $0$ as $k\to \infty$, leading
   to a contradiction.  
   % The statement
   % follows then by combining \eqref{eq:contradiction} with
   % \eqref{eq:normalized}, \eqref{eq:IIto0}, and \eqref{eq:Ito0}.
   \qedhere

%%%%%%%%%%%%%%%%%%%%%%%%%%%%%%%%%%%%%%%%%%
\section*{Acknowledgements}
Support from the Austrian Science Fund (FWF) through
projects 10.55776/F65, \linebreak 10.55776/I4354, 10.55776/I5149,
10.55776/P32788, 10.55776/I4052, 10.55776/V662, \linebreak 10.55776/P35359 and
10.55776/Y1292
as well as from BMBWF
through the OeAD-WTZ project CZ 09/2023 is gratefully acknowledged.
% E.D. acknowledges support from the Austrian Science Fund (FWF) through projects F\,65,  I\,4052, V\,662, and Y\,1292, as well as from BMBWF through the OeAD-WTZ project CZ04/2019. K.N. is partially supported by the Austrian Science Fund (FWF) project  F\,65. U.S. is supported by the Austrian Science Fund (FWF)
% projects F\,65, I\,4354, I\,5149, and P\,32788.
 %%%%%%%%%%%%%%%%%%%%%%%%%%%%%%%%%%%%%%%%%%

\end{document}